

\documentclass[12pt]{article}
%

\newcommand{\comment}[1]{}




\newcommand{\reals}{{\bf R}}

\newcommand{\supp}{\mathop{\rm supp}\nolimits }

\newcommand{\BibTeX}{{\rm B\kern-.05em{\sc i\kern-.025em b}\kern-.08em     
    T\kern-.1667em\lower.7ex\hbox{E}\kern-.125emX}}

\usepackage{geometry}
\geometry{paper=letterpaper,centering,top=1in,bottom=1in,left=1in,right=1in}
\typeout{Using final.sty macros from 27 October 2012.} 
\usepackage{hyperref}

\providecommand\note{}
\renewcommand\note[1]{}
\renewcommand\marginpar[1]{}

\usepackage{amsmath,amsthm,amscd,amssymb}

\usepackage{chngcntr}
\usepackage{apptools}

\AtAppendix{\counterwithin{equation}{section}}

\renewcommand{\reals}{\mathbb{R}}

\newcommand{\intr}{\mathop{\rm int}\nolimits}

\numberwithin{equation}{section} 
\newtheorem{theorem}[equation]{Theorem}
\newtheorem{proposition}[equation]{Proposition}

\newtheorem{lemma}[equation]{Lemma}

\begin{document}
\title{Estimates for Brascamp-Lieb forms in $L^p$-spaces with
  power weights\footnotetext{2010 MSC: 26B15, secondary: 52B99}}


\author{
 R.M.~Brown\footnote{
Russell Brown   is  partially supported by  grants from the Simons
Foundation (\#195075,\#422756).
} \\ Department of Mathematics\\ University of Kentucky \\
Lexington, KY 40506-0027, USA
\and
C.W.~Lee 
\\ Department of Mathematics\\ University of Kentucky \\
Lexington, KY 40506-0027, USA
\and
K.A.~Ott\footnote{Katharine Ott is partially supported by a grant from the Simons
Foundation (\#526904).} 
\\Department of Mathematics\\Bates College\\Lewiston, ME  04240-6048,
USA
}

\date{}

\maketitle

\begin{abstract}
  We study a family of Brascamp-Lieb forms acting on families of  weighted $L^p$-spaces and Lorentz spaces  where the weight is a power of the distance to the origin. We establish a set of necessary conditions and a set of sufficient conditions for the finiteness of these forms 
in Lorentz spaces.  The conditions are close to optimal. 
\end{abstract}

\section{Introduction}\label{Introduction}

A family of multi-linear forms was studied  by Brascamp and Lieb \cite{MR0412366} and these forms have found many applications since then. 
In previous work by the authors and collaborators \cite{MR1871279,MR3494243,MR2754810,MR3551174}, estimates for
Brascamp-Lieb forms are used  to study a non-linear
scattering map in two dimensions. 
The most recent paper in this series
\cite{MR3494243} proves estimates for Brascamp-Lieb forms in weighted
$L^2$-spaces where the weight is a power of the distance to the
origin. While the estimates established in section 3 of
\cite{MR3494243} were sufficient for the problem at hand, this work
leaves open the question of finding the largest set of indices for
which we have weighted estimates for a Brascamp-Lieb form.  Our goal
in this paper is to show that the techniques developed in
\cite{MR3494243} can be used to give close to optimal conditions for a
special class of forms. 

To illustrate the utility of the current results, we provide new proofs of
known results on certain linear and multi-linear fractional integrals
and establish  a large family of new results.  The earlier works that we
generalize include a result of Stein and Weiss \cite[Theorem
  B${}^*$]{MR2754810}, multi-linear generalizations studied by
Grafakos \cite[Theorem 1]{MR1164632}, and a result of Komori-Furuya
\cite[Theorem 3]{MR4047697}. There are still many interesting, open
problems related to general Brascamp-Lieb forms in weighted spaces.

The forms we will study are defined using 
a collection of non-zero vectors $\{ v_1, \dots, v_N\}$   in $ \reals ^2$.
We assume that this collection satisfies the following property: 
\begin{equation} \label{Assumption}
  \mbox{Each pair of distinct vectors in $\{v_1, \dots, v_N\}$ is a basis for $\reals^2$.} 
\end{equation}
For $v = (v^1, v^2) \in \reals ^2$ and $x=(x^1,x^2) \in \reals ^ { 2k}$, we define $ v\cdot x= v^1 x^1 +  v^2 x^2$. 
We define a multi-linear form by 
\begin{equation}\label{form}
\Lambda(f_1,\dots, f_N)=\int_{\mathbb{R}^{2k}}f_1(v_1\cdot x)\dots f_N(v_N\cdot x)\, dx,
\end{equation}
where $f_1,\dots,f_N$ are  functions on $\mathbb{R}^k$.
We want to bound the form \eqref{form} using the $L^{p_j}_{\lambda_j}$-norms of the functions
$f_j$. The space $L^p_{\lambda}$ is the collection
$
L^p_{\lambda}=\{f: |\cdot|^\lambda f\in L^p\}
$
where we allow $1/p\in [0,1]$ and $\lambda\in\mathbb{R}$.
The norm in $L^p_{\lambda}$ is given by 
$$
\|f\|_{L^p_{\lambda}}=\| |\cdot|^{\lambda}f\|_{L^p}.
$$
All norms will be over $\reals^k$.

Our aim is to determine the collection  of indices
$(1/p_1, \lambda _1, \dots, 1/p_N, \lambda _N) $  for  which the estimate 
\begin{align}\label{goal}
|\Lambda(f_1,\dots,f_N)| \leq C\prod_{j=1}^N \|f_j\|_{L^{p_j}_{\lambda_j}},
\end{align}
holds for a finite constant $C$.  Throughout this paper we will assume the functions $f_j$ are non-negative. Once we establish \eqref{goal} for non-negative functions, the estimate will follow for real or complex valued functions. 

The techniques that we use can be traced back at least to O'Neil \cite{MR0146673}, who studied fractional
integration on $L^p$-spaces by observing that the kernel of the Riesz potential, $|\cdot|^{-\lambda}$, 
lies in a weak $L^p$-space, or Lorentz space. We begin with a characterization of the families of (unweighted) $L^p$-spaces where the  form \eqref{form} is bounded. The first characterization of families of $L^p$-spaces where the form is bounded is  due to Barthe 
 \cite[Proposition 3]{MR1650312}.  Additional progress has been made by  Carlen, Lieb and Loss \cite{MR2077162} and
Bennett, Carbery, 
Christ and Tao \cite{MR2661170}.   Estimates for Brascamp-Lieb forms in Lorentz spaces were given  by Nie and Brown \cite{MR2754810} and especially Christ in an appendix to \cite{MR3551174}.   We use that  $ |\cdot |^ { -\lambda}$ lies in a Lorentz space for $ \lambda >0$  and a version of H\"older's inequality  to  give  estimates for the form \eqref{form} when $f_j$ belong to weighted spaces $L^p_\lambda$  with $\lambda >0$. The novel ingredient in this paper and the step that restricts our work to forms satisfying \eqref{Assumption} is an algebraic argument that allows us to obtain estimates for the form \eqref{form} with $f_j \in L^p_\lambda$ with $\lambda <0$ in terms of estimates where $ \lambda \geq 0$. 

To begin our development, we 
let $\mathcal{P}$ denote the set of indices where \eqref{goal} holds, 
\begin{align}\label{setP}
\mathcal{P}=\{(\frac{1}{p_1},\lambda_1, \dots , \frac 1{p_N}, \lambda _N ) :\eqref{goal}
\mbox{ holds }\}\subset([0,1]\times \mathbb{R})^N,
\end{align}
and we have the following observation.

\begin{proposition}\label{convex}
The set $\mathcal{P}$ is  convex.
\end{proposition}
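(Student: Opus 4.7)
The plan is to prove convexity by a direct H\"older inequality argument, essentially a multilinear analogue of Riesz--Thorin interpolation tailored to non-negative functions and power weights. Fix two points $(1/p_1^0, \lambda_1^0, \dots, 1/p_N^0, \lambda_N^0)$ and $(1/p_1^1,\lambda_1^1,\dots,1/p_N^1,\lambda_N^1)$ in $\mathcal{P}$, with corresponding constants $C_0$ and $C_1$ in \eqref{goal}, and fix $\theta \in (0,1)$. Set $1/p_j^\theta = (1-\theta)/p_j^0 + \theta/p_j^1$ and $\lambda_j^\theta = (1-\theta)\lambda_j^0 + \theta\lambda_j^1$. The aim is to establish $\Lambda(f_1,\dots,f_N) \le C_0^{1-\theta}C_1^\theta \prod_j \|f_j\|_{L^{p_j^\theta}_{\lambda_j^\theta}}$ for non-negative $f_j$.

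The central step is to factor each $f_j(y) = g_j(y)\, h_j(y)$ by setting
\[
g_j(y) = |y|^{(1-\theta)\gamma_j} f_j(y)^{(1-\theta)\delta_j}, \qquad h_j(y) = |y|^{\theta\gamma_j'} f_j(y)^{\theta\delta_j'},
\]
where $\delta_j = p_j^\theta/p_j^0$, $\gamma_j = \lambda_j^\theta\delta_j - \lambda_j^0$, $\delta_j' = p_j^\theta/p_j^1$, and $\gamma_j' = \lambda_j^\theta\delta_j' - \lambda_j^1$. The defining relations for $p_j^\theta$ and $\lambda_j^\theta$ force $(1-\theta)\delta_j + \theta\delta_j' = 1$ and $(1-\theta)\gamma_j + \theta\gamma_j' = 0$, so $g_j h_j = f_j$ identically. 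The point of this choice is that the rescaled functions $\tilde g_j := g_j^{1/(1-\theta)} = |y|^{\gamma_j} f_j(y)^{\delta_j}$ and $\tilde h_j := h_j^{1/\theta}$ satisfy the endpoint matching identities $\|\tilde g_j\|_{L^{p_j^0}_{\lambda_j^0}} = \|f_j\|_{L^{p_j^\theta}_{\lambda_j^\theta}}^{p_j^\theta/p_j^0}$ and $\|\tilde h_j\|_{L^{p_j^1}_{\lambda_j^1}} = \|f_j\|_{L^{p_j^\theta}_{\lambda_j^\theta}}^{p_j^\theta/p_j^1}$, as a direct unweighting calculation confirms.

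I would then apply H\"older's inequality in $x \in \reals^{2k}$ with conjugate exponents $1/(1-\theta)$ and $1/\theta$ to
\[
\Lambda(f_1,\dots,f_N) = \int_{\reals^{2k}} \Bigl(\prod_{j=1}^N g_j(v_j\cdot x)\Bigr)\Bigl(\prod_{j=1}^N h_j(v_j\cdot x)\Bigr)\, dx,
\]
obtaining $\Lambda(f_1,\dots,f_N) \le \Lambda(\tilde g_1,\dots,\tilde g_N)^{1-\theta}\,\Lambda(\tilde h_1,\dots,\tilde h_N)^{\theta}$. Inserting the hypothesized endpoint estimates for the two points of $\mathcal{P}$ yields
\[
\Lambda(f_1,\dots,f_N) \le C_0^{1-\theta}C_1^\theta \prod_{j=1}^N \|f_j\|_{L^{p_j^\theta}_{\lambda_j^\theta}}^{(1-\theta)p_j^\theta/p_j^0 + \theta p_j^\theta/p_j^1},
\]
and the exponent collapses to $1$ by the defining relation for $p_j^\theta$. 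The only real obstacle is the exponent bookkeeping, which is forced by the linear defining relations for $p_j^\theta$ and $\lambda_j^\theta$; endpoint cases such as $p_j^0$ or $p_j^1$ equal to $\infty$ (i.e.\ $1/p_j^s = 0$) are handled with the usual $L^\infty$ convention in H\"older's inequality, and the cases $\theta\in\{0,1\}$ are trivial.
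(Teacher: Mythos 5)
Your proof is correct, and it takes a genuinely different route from the paper's. The paper disposes of the proposition in two sentences by citing the fact that the power-weighted $L^p$-spaces form a complex interpolation scale (Bergh--L\"ofstr\"om, Theorem 5.5.3) and then invoking the multilinear Riesz--Thorin theorem (Bergh--L\"ofstr\"om, Theorem 4.4.1). You instead give a self-contained, elementary argument: a ``splitting trick'' in which each $f_j$ is written as a product of two power-weighted powers of itself, followed by a single application of H\"older's inequality in $x\in\reals^{2k}$ with exponents $1/(1-\theta)$ and $1/\theta$. This works precisely because the form $\Lambda$ integrates a \emph{nonnegative} product against Lebesgue measure, so the two endpoint estimates are applied to the nonnegative functions $\tilde g_j$ and $\tilde h_j$ directly; your exponent bookkeeping (the identities $(1-\theta)\delta_j+\theta\delta_j'=1$ and $(1-\theta)\gamma_j+\theta\gamma_j'=0$, together with the unweighting computation of $\|\tilde g_j\|_{L^{p_j^0}_{\lambda_j^0}}$) all checks out, and the standing assumption $f_j\geq 0$ from the introduction is exactly what the H\"older step needs. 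The trade-off: the paper's proof is shorter and transparently extends to complex-valued $f_j$ and any scale amenable to complex interpolation, but it imports nontrivial machinery; your argument avoids interpolation theory entirely and even yields the multiplicative constant $C_0^{1-\theta}C_1^\theta$ explicitly, at the cost of being specific to positive multilinear integral forms (which is all that is needed here). Both are valid; yours is arguably more in the spirit of ``log-convexity of $L^p$-norms.'' One minor remark: when stating the H\"older step it would be cleaner to note explicitly that $\tilde g_j$ and $\tilde h_j$ are nonnegative measurable functions on $\reals^k$, so that the endpoint bounds \eqref{goal} (which the paper proves for nonnegative functions and then extends) apply to them without further comment.
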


\begin{proof}
  According to a result in Bergh and L\"ofstr\"om \cite[Theorem
    5.5.3]{MR0482275}, the spaces $ L^p_\lambda$ form a complex
  interpolation scale. Thus, 
the proposition follows from a multi-linear version of the
Riesz-Thorin interpolation theorem in Bergh and L\"ofstr\"om
\cite[Theorem 4.4.1]{MR0482275}.
\end{proof}
We will see in Theorem \ref{nec} below that $\mathcal{P}$ lies in the hyperplane
\begin{align}
\left\{(\frac{1}{p_1},\lambda_1 , \dots \frac  1 {p_N}, \lambda _N ) :
\sum_{j=1}^N(\frac{1}{p_j}+
\frac {\lambda_j} k ) =2\right\}.
\end{align}
The goal of this note is to characterize the 
closure of $\mathcal{P}$ 
in this hyperplane.

We now give the main results of this paper. 
Theorem \ref{suff} gives conditions 
that imply the estimate \eqref{goal} and  Theorem \ref{nec} gives necessary conditions
for \eqref{goal}. Except for allowing equality in the condition
(\ref{HyperStrict}) and the inclusion of the
endpoints $1/p_j \in [0,1]$, the conditions of
Theorem \ref{nec} are identical to those of
Theorem \ref{suff}.

\note{included condition that $\infty >  p _j > 1$ on 5/12.}
\begin{theorem}\label{suff}
Suppose that $({1}/{p_1},\lambda _1, \dots, 1/p_N, \lambda _N) 
\in ((0,1)\times\mathbb{R})^N$ and that the following list of conditions are true:
\begin{align}
& \sum_{j=1}^N(\frac{1}{p_j}+\frac {\lambda_j} k) =2,\label{Scaling}\\
& \sum_{j\neq\ell}(\frac{1}{p_j}+\frac {\lambda_j} k)>1,\quad\ell=1,\dots,N,\label{HyperStrict}\\
& \sum_{j\neq\ell}\lambda_j\geq 0,\quad\ell=1,\dots,N,\label{Index}\\
& \sum_{j=1}^N \frac{1}{p_j}\geq 1. \label{Interpolation}
\end{align}
Then the estimate \eqref{goal} holds.
\end{theorem}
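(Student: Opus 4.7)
The plan follows the two-step strategy announced in the introduction. In Step 1 I handle the subcase where every $\lambda_j\ge 0$, using Hölder's inequality in Lorentz spaces together with the known Brascamp-Lieb estimate on products of Lorentz spaces. In Step 2 I reduce the general case to Step 1 via an algebraic identity that uses hypothesis \eqref{Assumption}.

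\textbf{Step 1 (non-negative weights).} For each $j$, set $g_j(y):=|y|^{\lambda_j}f_j(y)\in L^{p_j}(\reals^k)$, so $\|g_j\|_{p_j}=\|f_j\|_{L^{p_j}_{\lambda_j}}$ and $f_j(y)=|y|^{-\lambda_j}g_j(y)$. Since $|\cdot|^{-\lambda_j}\in L^{k/\lambda_j,\infty}(\reals^k)$, Hölder's inequality for Lorentz spaces places $f_j\in L^{q_j,p_j}(\reals^k)$ with $1/q_j:=1/p_j+\lambda_j/k$ and $\|f_j\|_{L^{q_j,p_j}}\lesssim\|f_j\|_{L^{p_j}_{\lambda_j}}$. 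Hypotheses \eqref{Scaling}, \eqref{HyperStrict}, \eqref{Interpolation} then become the scaling $\sum 1/q_j=2$, the strict hyperplane $\sum_{j\ne\ell}1/q_j>1$, and the secondary-exponent condition $\sum 1/p_j\ge 1$ in the $(q_j,p_j)$ variables, which are precisely the hypotheses of the Brascamp-Lieb estimate on products of Lorentz spaces (Christ's appendix to \cite{MR3551174}, and \cite{MR2754810}). This delivers \eqref{goal} in this subcase.

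\textbf{Step 2 (reduction).} Assume some $\lambda_\ell<0$. By \eqref{Assumption}, any two distinct vectors from $\{v_j:j\ne\ell\}$ are a basis of $\reals^2$, so $v_\ell=\alpha v_{j_1}+\beta v_{j_2}$ for scalars $\alpha,\beta$, giving for every $\epsilon>0$ the pointwise bound
\[
|v_\ell\cdot x|^{\epsilon}\le C\bigl(|v_{j_1}\cdot x|^{\epsilon}+|v_{j_2}\cdot x|^{\epsilon}\bigr).
\]
Multiplying the integrand in \eqref{form} by $|v_\ell\cdot x|^{\epsilon}|v_\ell\cdot x|^{-\epsilon}$ and applying this inequality splits $\Lambda\le C\Lambda^{(1)}+C\Lambda^{(2)}$, where in branch $i$ only the functions $f_\ell\mapsto|\cdot|^{-\epsilon}f_\ell$ and $f_{j_i}\mapsto|\cdot|^{\epsilon}f_{j_i}$ are modified. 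The weighted norms agree with the originals provided the weight indices are shifted to $\widetilde\lambda_\ell=\lambda_\ell+\epsilon$ and $\widetilde\lambda_{j_i}=\lambda_{j_i}-\epsilon$. I choose $j_1,j_2$ both with $\lambda_{j_i}>0$, which is possible because \eqref{Index} applied at the unique positive index (were there only one) would force every other $\lambda_j$ to vanish and contradict $\lambda_\ell<0$; I then set $\epsilon=\min(|\lambda_\ell|,\lambda_{j_1},\lambda_{j_2})$. A direct verification shows that every sub-form satisfies \eqref{Scaling}--\eqref{Interpolation}: the scaling and \eqref{Interpolation} are preserved automatically; the strict inequality \eqref{HyperStrict} uses $p_\ell>1$ together with the original strict inequality to absorb the increment $\epsilon/k$ at the index $\ell$; and \eqref{Index} follows from the original constraints by a short case analysis using $\sum_j\lambda_j\ge 0$. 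One then iterates the reduction on each sub-form until every weight is non-negative and invokes Step 1.

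\textbf{Main obstacle.} The heart of the argument is Step 2, in particular (i) checking that all four hypotheses \eqref{Scaling}--\eqref{Interpolation} propagate through each split, and (ii) showing that the iteration terminates so that the final bound is a finite sum of sub-forms with all weights non-negative. The $\min$ choice of $\epsilon$ drives one of the three participating weights $\lambda_\ell,\lambda_{j_1},\lambda_{j_2}$ to zero in at least one branch of each split and strictly decreases $\sum_j|\lambda_j|$ by $2\epsilon$ in every branch; the role of \eqref{Index} is to guarantee throughout the iteration that two strictly positive indices remain available to play the roles of $j_1,j_2$, while \eqref{HyperStrict} provides the strict budget that keeps the shifts legitimate.
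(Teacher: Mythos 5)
Your overall strategy coincides with the paper's: handle the case of non-negative weights via Hölder's inequality in Lorentz spaces combined with the known Lorentz-space Brascamp-Lieb estimate, then reduce the general case to this one by writing $v_\ell$ in the basis $\{v_{j_1},v_{j_2}\}$ and splitting the form with the elementary inequality $|v_\ell\cdot x|^\epsilon\lesssim|v_{j_1}\cdot x|^\epsilon+|v_{j_2}\cdot x|^\epsilon$. Step 1 is correct as stated and matches the paper's Theorem \ref{BCCTCor} combined with \eqref{LHCons}. Your observation that two strictly positive indices are always available is also correct, and your assertion that the four conditions \eqref{Scaling}--\eqref{Interpolation} propagate through the split can indeed be verified (for instance, for \eqref{Index} at $m=\ell$ one uses that $\sum_{j\ne\ell}\lambda_j=\sum_j\lambda_j+|\lambda_\ell|\ge|\lambda_\ell|\ge\epsilon$, where $\sum_j\lambda_j\ge0$ follows by averaging \eqref{Index}).

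The one genuine gap is in the termination argument for your iteration, and it is caused precisely by your choice $\epsilon=\min(|\lambda_\ell|,\lambda_{j_1},\lambda_{j_2})$. You correctly note that $\sum_j|\lambda_j|$ decreases by $2\epsilon$ in every branch and that \emph{some} weight is driven to zero in \emph{at least one} branch, but this is not enough: the other branch need not shed a weight, $\epsilon$ is not bounded below along the iteration, and so the decrease of $\sum|\lambda_j|$ does not by itself force termination. For example, if $\epsilon=\lambda_{j_1}$ is the minimum, then in the branch where $j_2$ is modified neither $\widetilde\lambda_\ell$ nor $\widetilde\lambda_{j_2}$ need vanish, and the count of nonzero weights stagnates; showing that such a "no-shed" branch cannot persist forever requires a further argument (essentially that along such a branch $|\lambda_\ell|$ decreases by at least the fixed value $\lambda_{j_1}$ each step, so it must eventually drop below $\lambda_{j_1}$, at which point the minimum shifts). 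The paper's Lemma \ref{Step} avoids this difficulty entirely by taking the \emph{full} shift $\alpha_{j_0}-\lambda_{j_0}$, which annihilates the residual $\lambda_{j_0}-\beta_{j_0}=\lambda_{j_0}-\gamma_{j_0}=0$ in \emph{both} branches; the support of the residual then strictly decreases in both children, and termination in at most $N$ steps per path is immediate. The paper pays for this by allowing intermediate residuals $\lambda_j-\alpha_j$ to go negative (hence the need for its conditions \eqref{P1}, \eqref{Z}, and \eqref{Index2} rather than just positivity), whereas your partial shift keeps all weights in the correct half-line at each stage but sacrifices the clean termination. Either route can be made to work, but as written your termination claim is not established.
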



\begin{theorem}\label{nec}
Suppose that \eqref{goal} holds. Then the set of indices $(1/p_1, \lambda _1, \dots , 1/ p_N , \lambda _N ) 
\in ([0,1]\times\mathbb{R})^N$ satisfy \eqref{Scaling}, \eqref{Index},
\eqref{Interpolation}, and the inequalities
\begin{align}
& \sum_{j\neq\ell}(\frac{1}{p_j}+\frac {\lambda_j} k)\geq 1,\quad\ell=1,\dots,N.\label{Hyper}
\end{align}
\end{theorem}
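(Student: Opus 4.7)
My plan is to derive each of the four conditions in Theorem~\ref{nec} by evaluating \eqref{goal} on a carefully chosen one-parameter family of test functions and comparing the asymptotic behaviour of the two sides, using \eqref{Assumption} throughout to introduce convenient coordinates on $\reals^{2k}$. For \eqref{Scaling}, apply \eqref{goal} to the dilates $f_j(t\,\cdot)$: a change of variable shows $\Lambda(f_1(t\cdot),\dots,f_N(t\cdot))=t^{-2k}\Lambda(f_1,\dots,f_N)$ and $\|f_j(t\cdot)\|_{L^{p_j}_{\lambda_j}}=t^{-k/p_j-\lambda_j}\|f_j\|_{L^{p_j}_{\lambda_j}}$, so the estimate for all $t>0$ forces the exponents to match, which is \eqref{Scaling}.

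For \eqref{Hyper} and \eqref{Index}, fix $\ell$ and choose any $i\neq\ell$; the pair $v_\ell,v_i$ is a basis by \eqref{Assumption}, so after the constant-Jacobian change of variables $(y_1,y_2)=(v_\ell\cdot x,v_i\cdot x)\in\reals^k\times\reals^k$ we have $v_j\cdot x=\alpha_j y_1+\beta_j y_2$ with $\beta_j\neq 0$ for each $j\neq\ell$. To establish \eqref{Hyper}, take $f_\ell=\chi_{B(0,\delta)}$, $f_i=\chi_{B(0,1)}$, and $f_j=\chi_{B(0,M)}$ for the remaining indices with $M$ fixed and large; for $\delta$ small the auxiliary constraints become automatic, so $\Lambda$ is comparable to the measure of $\{|y_1|<\delta,\,|y_2|<1\}$, giving $\Lambda\asymp\delta^k$, while $\|f_\ell\|_{L^{p_\ell}_{\lambda_\ell}}\asymp\delta^{k/p_\ell+\lambda_\ell}$ and the other norms stay bounded. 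Letting $\delta\to 0$ forces $1/p_\ell+\lambda_\ell/k\le 1$, which is \eqref{Hyper} by \eqref{Scaling}. To establish \eqref{Index}, keep the same coordinates and test instead with $f_\ell=\chi_{B(0,1)}$, $f_i=\chi_{B(a,1)}$ with $|a|=R$, and $f_j=\chi_{B(\beta_j a,C)}$ for the remaining indices with $C$ fixed and large. Since each $\beta_j\neq 0$ the centers satisfy $|\beta_j a|\asymp R$, the alignment of supports keeps $\Lambda$ bounded below by a positive constant uniform in $R$, and $\prod_{j\neq\ell}\|f_j\|_{L^{p_j}_{\lambda_j}}\asymp R^{\sum_{j\neq\ell}\lambda_j}$, so sending $R\to\infty$ yields \eqref{Index}.

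For \eqref{Interpolation} I use the critical singular test $f_j(y)=|y|^{-s_j}\chi_{B(0,1)}(y)$ with $s_j=\lambda_j+k/p_j-\epsilon$, $\epsilon>0$ small, so that $\sum_j s_j=2k-N\epsilon$ by \eqref{Scaling}. A direct polar computation gives $\|f_j\|_{L^{p_j}_{\lambda_j}}\asymp\epsilon^{-1/p_j}$, so the product of norms is $\asymp\epsilon^{-\sum_j 1/p_j}$. For the form, restrict to $|x|<r_0$ with $r_0$ so small that every cutoff $|v_j\cdot x|<1$ is automatic; in polar coordinates on $\reals^{2k}$ this factors the integral as $\bigl(\int_0^{r_0} r^{N\epsilon-1}\,dr\bigr)\bigl(\int_{S^{2k-1}}\prod_j|v_j\cdot\theta|^{-s_j}\,d\theta\bigr)$, with radial factor $\asymp 1/\epsilon$. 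Comparing $1/\epsilon\lesssim \epsilon^{-\sum_j 1/p_j}$ as $\epsilon\to 0$ forces $\sum_j 1/p_j\ge 1$, which is \eqref{Interpolation}.

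The main obstacle in the last step is to verify that the angular integral is finite and bounded away from zero so that the heuristic $\Lambda\gtrsim 1/\epsilon$ is rigorous. The singular strata are the codimension-$k$ subspheres $\{v_j\cdot\theta=0\}\subset S^{2k-1}$, which are pairwise disjoint since \eqref{Assumption} ensures every two $v_j$'s are linearly independent; near each stratum only one factor is singular, and integrability in the transverse directions reduces to the condition $s_j<k$. This holds with strict inequality since the already-established \eqref{Hyper} gives $\lambda_j+k/p_j\le k$ and $s_j=\lambda_j+k/p_j-\epsilon$. The endpoint cases where \eqref{Hyper} holds with equality, where $1/p_j\in\{0,1\}$, or where $\lambda_j+k/p_j\le 0$ (so the indicator test for the other conditions has infinite weighted norm) are handled by approximating from within the interior of $\mathcal{P}$ and invoking the convexity in Proposition~\ref{convex}.
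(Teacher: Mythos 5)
Your overall strategy — testing \eqref{goal} on scaling families, thin slabs, translated bumps, and near-critical power functions — is the same as the paper's, and your arguments for \eqref{Scaling} and \eqref{Interpolation} are sound. For \eqref{Interpolation} you depart from the paper in an interesting way: you test on $f_j(y)=|y|^{-s_j}\chi_{B(0,1)}(y)$ and compute the rate at which both sides blow up as $\epsilon\to 0$, whereas the paper uses lacunary characteristic functions $\varphi_j(t)=\sum_m m^{-(1+\varepsilon)/p_j}2^{-m(k/p_j+\lambda_j)}\chi_{[2^m,2^{m+1}]}(|t|)$ supported at infinity; both work, and both are in the spirit of Bez--Lee--Nakamura--Sawano, which the paper cites for this step.

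There is, however, a genuine gap in your arguments for \eqref{Hyper} and \eqref{Index}. All of your auxiliary test functions $\chi_{B(0,1)}$ and $\chi_{B(0,M)}$ have support containing the origin, and $\|\chi_{B(0,M)}\|_{L^{p_j}_{\lambda_j}}$ is finite only when $\lambda_j+k/p_j>0$. That inequality is \emph{not} among the hypotheses of Theorem~\ref{nec}, and in fact it can fail on the interior of $\mathcal{P}$: for example with $N=4$, $k=1$, the indices $(1/p_1,\lambda_1,\dots,1/p_4,\lambda_4)=(0.5,-0.6,\,0.2,0.5,\,0.2,0.5,\,0.2,0.5)$ satisfy all of the (strict) sufficient conditions of Theorem~\ref{suff}, yet $\lambda_1+k/p_1=-0.1<0$. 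Your proposed fix, to ``approximate from within the interior of $\mathcal{P}$ and invoke convexity,'' therefore does not address the problem: the offending indices need not lie on the boundary of $\mathcal{P}$, and on a neighborhood of such a point every test of yours gives $\infty\leq\infty$, which yields no information. (Convexity of $\mathcal{P}$ also by itself does not imply that $\mathcal{P}$ is contained in the closure of the set where your tests have finite norm.) The paper avoids all of this by using characteristic functions of annuli $\chi_{[R/c_0,\,Rc_0]}(|t|)$, whose $L^{p_j}_{\lambda_j}$-norm is finite and $\asymp R^{k/p_j+\lambda_j}$ for every $\lambda_j\in\reals$. Your tests would be correct if you replaced each ball $B(0,r)$ by an annulus $\{r/2<|t|<r\}$ (or translated the balls away from the origin in a manner compatible with your coordinate change), and the asymptotics you derive go through essentially unchanged.
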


The proofs that the conditions \eqref{Scaling} and \eqref{Hyper} are necessary   are small extensions of arguments in Bennett, Carbery, Christ, and Tao \cite{MR2661170}. Condition \eqref{Interpolation} or equivalent statements for multi-linear operators are known to arise when studying the real method of interpolation for multi-linear operators. Our argument to establish the necessity of \eqref{Interpolation}  is similar to the proof of a  recent result of Bez, Lee, Nakamura, and Sawano \cite[Theorem 2]{MR3665018}  on Brascamp-Lieb forms in Lorentz spaces. 
The condition \eqref{Index} is closely tied to the weighted spaces we are using. The necessity of this condition appears to be new. In section \ref{Apps} we note that similar conditions have arisen in the study of multi-linear fractional integrals in the spaces $L^p_\lambda$.

The proofs of Theorems \ref{suff} and
\ref{nec} are presented in the next  two sections of the paper. 
In the last section, we show how to obtain the 
result of Stein and Weiss on fractional integration
in weighted $L^p$-spaces, discuss generalizations of this result for multi-linear fractional integrals, and present a few examples illustrating the limitations
of our results.

{\em Acknowledgement: }We thank the referee for their careful reading of the manuscript and many useful suggestions. 

\section{Proof of Theorem \ref{suff}} \label{suffproof}

Our first step  is to study the estimate  \eqref{goal} in unweighted $L^p$-spaces. We will show that for unweighted spaces, the set of indices $(1/p_1, \dots,1/p_N)$ for which the estimate \eqref{goal} holds is independent of the dimension  $k$ appearing in the definition of the form. 
To give the proof, we temporarily will use the notation $ \Lambda _k$ for the form \eqref{form} when the integration is on $ \reals ^ { 2k}$. 

\begin{proposition} \label{KEquals1} Fix a vector of indices $ (1/p_1, \dots , 1/ p_N)$ and assume that $ \lambda _1 = \dots = \lambda _N=0$. Let $C_k = C_k ( 1/ p _1, \dots, 1 /p_N)$ be the constant in \eqref{goal} for the form $ \Lambda _k$. We have $ C_k= (C_1)^k$. 
\end{proposition}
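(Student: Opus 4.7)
The plan is to prove the two inequalities $C_k \le (C_1)^k$ and $C_k \ge (C_1)^k$ separately. The upper bound will come from an inductive slicing argument that peels off one pair of coordinates at a time, and the lower bound will come from testing on tensor products of near-extremal functions.

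For the upper bound, I would induct on $k$. Write a point $x \in \reals^{2k}$ as $x = (x^1, x^2)$ with $x^1, x^2 \in \reals^k$, and split each $x^i = (x^i_1, {x^i}')$ with $x^i_1 \in \reals$ and ${x^i}' \in \reals^{k-1}$. Correspondingly, view each $f_j : \reals^k \to \reals$ as $f_j(t, y')$ with $t \in \reals$, $y' \in \reals^{k-1}$. Because $v_j \cdot x = v_j^1 x^1 + v_j^2 x^2$ is computed componentwise, the argument of $f_j$ splits as $(v_j^1 x^1_1 + v_j^2 x^2_1,\ v_j^1 {x^1}' + v_j^2 {x^2}')$. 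Fixing ${x^1}'$ and ${x^2}'$ and integrating first in $(x^1_1, x^2_1)$, the inner integral is precisely $\Lambda_1(g_1,\dots,g_N)$ where $g_j(t) = f_j(t,\ v_j^1 {x^1}' + v_j^2 {x^2}')$, so the $k=1$ estimate gives a bound by $C_1 \prod_j \|g_j\|_{L^{p_j}(\reals)}$. Substituting $F_j(y') := \bigl(\int_\reals |f_j(t,y')|^{p_j}\,dt\bigr)^{1/p_j}$, the remaining integral is $C_1 \,\Lambda_{k-1}(F_1,\dots,F_N)$. Fubini gives $\|F_j\|_{L^{p_j}(\reals^{k-1})} = \|f_j\|_{L^{p_j}(\reals^k)}$, and the inductive hypothesis finishes off with $C_{k-1} \le C_1^{k-1}$, yielding $C_k \le C_1^k$.

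For the lower bound, I would exploit that the sharp constant $C_1$ is a supremum, so there exist near-extremal test functions $h_j^{(n)}$ on $\reals$ with
\begin{equation*}
\frac{\Lambda_1(h_1^{(n)},\dots,h_N^{(n)})}{\prod_{j=1}^N \|h_j^{(n)}\|_{L^{p_j}(\reals)}} \longrightarrow C_1.
\end{equation*}
Set $f_j^{(n)}(y_1,\dots,y_k) = \prod_{i=1}^k h_j^{(n)}(y_i)$. Writing the integral over $\reals^{2k}$ as an iterated integral over the $k$ blocks $(x^1_i, x^2_i) \in \reals^2$, the tensor-product structure makes the integrand factor across blocks, so $\Lambda_k(f_1^{(n)},\dots,f_N^{(n)}) = \Lambda_1(h_1^{(n)},\dots,h_N^{(n)})^k$, while $\|f_j^{(n)}\|_{L^{p_j}(\reals^k)} = \|h_j^{(n)}\|_{L^{p_j}(\reals)}^k$. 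Taking the ratio and letting $n \to \infty$ gives $C_k \ge C_1^k$.

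The main potential obstacle is not technical but notational: cleanly relabeling coordinates in $\reals^{2k} = (\reals^2)^k$ so the block-diagonal action of each $v_j$ on these pairs is transparent, and verifying that the Fubini identity $\|F_j\|_{p_j} = \|f_j\|_{p_j}$ propagates the induction without loss. Once this bookkeeping is in place, both directions are immediate. The argument does not use \eqref{Assumption} and works for any fixed choice of vectors $\{v_1,\dots,v_N\}$ in $\reals^2$, and it is insensitive to whether $C_1$ is finite (both sides are read as elements of $[0,\infty]$).
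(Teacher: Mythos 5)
Your proposal is correct and takes essentially the same approach as the paper: the lower bound $C_k \ge C_1^k$ comes from tensoring one-dimensional functions (the paper tensors arbitrary $f_j$ and then takes a supremum, which is equivalent to your near-extremizer formulation), and the upper bound comes from a slicing/Fubini induction that reduces $\Lambda_k$ to $C_1 \Lambda_{k-1}$ (the paper peels off the last coordinate, you peel off the first — same argument).
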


\begin{proof}   We first show $ (C_1)^k \leq C_k$.   If $ f _ j$ is a non-negative measurable  function on $ \reals$,  we define $ F_j: \reals ^k \rightarrow [0, \infty)$ by $ F_j(y_1, \dots, y _k) = \prod _ {\ell =1 } ^ k f_j ( y_ \ell)$ and observe that $ \|F_j\| _ { L^ p ( \reals ^ k)} = \|f_j\|^k_ { L^ p( \reals ) }$. Thus, if we have \eqref{goal} for $ \Lambda _k$, we conclude that 
        $$
        \Lambda_1 ( f_1, \dots, f_N) ^ k = \Lambda _k (F_1, \dots , F_N)\leq C_k \prod _{ j =1} ^ N \|F_j \| _ { L^ { p_j }( \reals ^k )} = C_k  \prod _{ j =1 } ^ N \| f _j \| ^k_ { L^ { p_j } ( \reals ) } .
        $$
        This implies $ (C_1) ^ k \leq C_k$.

To show the reverse inequality,  it suffices to show $ C_{ k +1} \leq  C_1 C_k$ for $k=1,2,\dots$. Towards this end, let $ F_j$ be a non-negative, measurable function on  $ \reals ^ { k +1}$ and set $f_j(y_1, \dots, y_k) = \| F_j (y_1, \dots, y_k , \cdot ) \| _ { L^ p ( \reals) }$ where the norm is taken in the last variable,
$y _ {  k +1 }$. An application of Tonelli's theorem and the estimate \eqref{goal} for $k=1$ gives that
  $$ \Lambda _ { k+1} (F_1, \dots , F_N)  \leq  C_1 \Lambda _k (f_1, \dots , f_N). $$
  It follows that  $ C_{ k+1} \leq C_1 C_k$ and then an induction argument gives $ C_k \leq ( C_1)^k$.
\end{proof}

Our next step requires us to consider Lorentz spaces
$L^{p,r}$, for $ 1\leq p,r\leq \infty$, as defined in
\cite{MR0482275}. We will also utilize weighted Lorentz spaces, $L^
     {p,r}_ \lambda$, equipped with the quasi-norm $ \| f \|_ { L^ { p,r
       } _\lambda} = \| |\cdot |^ \lambda f\|_ { L^ { p,r}}$.
As is well-known, Lorentz spaces arise naturally in real
interpolation and a  multi-linear interpolation theorem is 
 important for our argument.      


\begin{theorem}
  \label{BCCTCor}
  Suppose that $ (1/p_1, \dots, 1/p_N) \in (0,1)^N$, satisfies
  \eqref{Scaling} (with all $ \lambda _j =0$) and $ (1/r_1, \dots, 1/r_N)$ satisfies
  $ \sum _ { j=1} ^N \frac 1 { r _ j } \geq 1$. Then there exists a finite constant $C$ so that
\begin{equation}  \label{GoalLorentz}
  \Lambda ( f_1, \dots, f_N) \leq C \prod _{j=1} ^N \| f_j \| _ { L^ { p_j, r _j }}.
\end{equation}
\end{theorem}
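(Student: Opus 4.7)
The plan is to establish the estimate in two stages. First I would prove the analogous strong $L^p$ estimate (that is, \eqref{GoalLorentz} with each $r_j = p_j$) at every point of $(0,1)^N$ satisfying the scaling condition, by direct application of the Bennett--Carbery--Christ--Tao theorem \cite{MR2661170}. Then I would upgrade to Lorentz spaces with the aggregated constraint $\sum_j 1/r_j \geq 1$ by invoking a multilinear real interpolation theorem.

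For the $L^p$ step, the linear maps associated with $\Lambda$ are $L_j \colon \mathbb{R}^{2k} \to \mathbb{R}^k$, $L_j(x^1,x^2) = v_j^1 x^1 + v_j^2 x^2$, whose kernels $K_j$ are $k$-dimensional. The key observation is that, for $j \neq \ell$, the defining system for $K_j \cap K_\ell$ factors componentwise into $k$ copies of the $2\times 2$ linear system with coefficient matrix having rows $v_j, v_\ell$; assumption \eqref{Assumption} forces this determinant to be nonzero, so $K_j \cap K_\ell = \{0\}$. Consequently, for every subspace $V \subset \mathbb{R}^{2k}$, $\sum_j \dim(V \cap K_j) \leq \dim V$. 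After using the scaling hypothesis $\sum_j 1/p_j = 2$, the BCCT subspace inequality $\dim V \leq \sum_j (1/p_j)\dim L_j(V)$ rearranges to $\sum_j (1/p_j)\dim(V\cap K_j) \leq \dim V$, and since each $1/p_j < 1$ this is controlled by the previous display (strictly whenever some $\dim(V\cap K_j) > 0$). Thus BCCT yields the $L^p$ estimate at every $(1/p_1,\dots,1/p_N) \in (0,1)^N$ on the scaling hyperplane. (Alternatively, one may appeal to Proposition~\ref{KEquals1} and verify BCCT only in the simpler case $k=1$.)

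For the Lorentz step, I would select $N$ auxiliary indices $(1/p_1^{(i)},\dots,1/p_N^{(i)}) \in (0,1)^N$, $i=1,\dots,N$, on the scaling hyperplane, forming a non-degenerate $(N-1)$-simplex whose relative interior contains $(1/p_1,\dots,1/p_N)$. The previous step provides the strong $L^p$ estimate at each vertex. Applying the multilinear Marcinkiewicz-type interpolation theorem for Brascamp--Lieb forms, as developed in the appendix by Christ in \cite{MR3551174} (see also \cite{MR2754810}), then delivers the Lorentz estimate \eqref{GoalLorentz} at the target point under the single aggregated condition $\sum_j 1/r_j \geq 1$.

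The principal obstacle will be in the second step. A direct application of Riesz--Thorin (or the usual $K$-functional interpolation) between the $L^p$ vertices would only produce Lorentz estimates under the much stronger pointwise condition (essentially forcing $r_j = p_j$). Obtaining the optimal aggregated constraint $\sum_j 1/r_j \geq 1$ requires the finer multilinear real interpolation argument, which exploits both the scalar-valued target and the freedom to place $N$ interpolation vertices across the $(N-1)$-dimensional scaling hyperplane.
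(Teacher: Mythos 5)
Your overall structure — Bennett--Carbery--Christ--Tao for the $L^p$ estimate, then Christ's multilinear real interpolation (from the appendix of \cite{MR3551174}) to pass to Lorentz spaces under the aggregated condition $\sum_j 1/r_j \geq 1$ — is exactly the route the paper takes. The parenthetical remark about reducing to $k=1$ via Proposition~\ref{KEquals1} is in fact what the paper does, and that reduction is not merely optional: it is needed because, as written, your direct verification of the BCCT subspace inequalities in $\reals^{2k}$ contains an error.

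The problematic step is the claim that pairwise trivial intersection of the kernels $K_j$ implies
$\sum_{j} \dim(V \cap K_j) \leq \dim V$ for every subspace $V \subset \reals^{2k}$. This is false for $N \geq 3$: take $V = \reals^{2k}$, so that $\dim(V\cap K_j) = k$ for every $j$, giving $\sum_j \dim(V\cap K_j) = Nk > 2k = \dim V$. Pairwise trivial intersection only yields a direct-sum bound for two subspaces at a time, i.e.\ $\dim(V\cap K_i) + \dim(V\cap K_j) \leq \dim V$ for $i\neq j$; it does not control the full sum. The inequality you actually need, $\sum_j (1/p_j)\dim(V\cap K_j) \leq \dim V$, does happen to be true, but it must be derived differently. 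For instance, ordering $d_j := \dim(V\cap K_j)$ so that $d_1 \geq d_2 \geq \cdots$, one can write
$\sum_j (1/p_j) d_j \leq (1/p_1) d_1 + d_2 \sum_{j\geq 2}(1/p_j) = (1/p_1)(d_1-d_2) + 2d_2 \leq d_1 + d_2 \leq \dim V$,
using $\sum_j 1/p_j = 2$, $1/p_1 \leq 1$, and the two-subspace direct-sum bound. Alternatively, and more cleanly, first invoke Proposition~\ref{KEquals1} to reduce to $k=1$, where the only nontrivial subspaces of $\reals^2$ are lines and the case analysis is immediate — this is the paper's choice, and the one I would recommend you adopt rather than patching the general-$k$ argument. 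The interpolation stage is described at the right level of detail and matches the paper's appeal to Christ's result, so no changes are needed there.
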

Our proof follows the argument in 
Christ's appendix to the work of Perry  \cite{MR3551174}. 
 We repeat   Christ's argument in order to show that the conditions are independent of the exponent $k$ appearing in the definition of the form.
Additional information on the multi-linear interpolation result used below  can be found in 
Christ \cite[pp.~227--228]{MR766216} or 
 Janson \cite{SJ:1986}. A more recent discussion may be found in Grafakos and Kalton \cite{MR1812822}.

\begin{proof} 
 Throughout this proof, we assume all  $\lambda _j =0$. Under
 this assumption, let $ \mathcal{P}_0$ represent the collection of exponents $(1/p_1, \dots, 1/p_N)$ for which \eqref{goal} holds. We note that Proposition \ref{KEquals1} implies that the set $\mathcal{P}_0$ is independent of $k$ and thus it suffices to consider the case $k=1$.   
  We claim that $ \mathcal{P}_0$ is the collection of  $(1/p_1, \dots, 1/p_N)\in [0,1]^N$ which satisfy \eqref{Scaling} and \eqref{Hyper}. 
  From Theorem 2 in the work of Bennett {\em et. al.\ } \cite{MR2661170} we have that $\mathcal{P}_0$ is
  characterized by the equality \eqref{Scaling} and the family of inequalities 
  \begin{equation} \sum _ { j=1} ^N \frac { \dim (v_j \cdot V ) } { p _j } \geq \dim(V)\label{SubSpace}
  \end{equation}
  for each subspace $V \subset  \reals ^2$. Here, we are using $ v_j \cdot V = \{ v_j \cdot w : w \in V\}$.  If $ V = \reals ^2$, then \eqref{SubSpace} follows from \eqref{Scaling}. If the dimension of $V$ is one, then thanks to the assumption \eqref{Assumption}, we have $ \dim(v_\ell \cdot V ) = 0$ for at most one $\ell$. If $\dim(v_j \cdot V) =1$ for all $j$, then \eqref{SubSpace} follows from scaling again. If $ \dim (v_\ell \cdot V) =0$ for one $\ell$, then we have
  $$ \sum  _ { j \neq \ell }  \frac 1 { p _j } = 2 - \frac 1 { p _ \ell } .$$
  Thus \eqref{SubSpace} follows from our assumption that $ 1/p_\ell \in [0,1]$.

At this point, we appeal to the real interpolation  argument in Christ's Appendix 1 of \cite{MR3551174}, which gives that the estimate \eqref{GoalLorentz} holds  in the interior of $\mathcal{P}_0$ provided the indices $r_j$ satisfy $ \sum _ { j =1 } ^ N 1/r _ j \geq 1$.  The  theorem follows. 
\end{proof}  


Now we turn to the study of the form \eqref{form} in 
weighted $L^p$-spaces. To this end we will need a version
of H\"older's inequality in Lorentz spaces. One proof of
the following proposition may be found in O'Neil \cite[Theorem 3.4]{MR0146673}.

\begin{proposition} \label{LorentzHolder}
Let $f_1$ and $f_2$ be measurable functions on $\mathbb{R}^k$. There exists a finite constant $C$
such that 
\begin{align}\label{HIB}
\|f_1 f_2\|_{L^{p,r}}\leq C\|f_1\|_{L^{p_1,r_1}}\|f_2\|_{L^{p_2,r_2}},
\end{align}
provided $\frac{1}{p} = \frac{1}{p_1}+\frac{1}{p_2}<1$, $\frac{1}{r} \leq 
\frac{1}{r_1}+\frac{1}{r_2}$, and $\frac{1}{r_1}, \frac{1}{r_2}$, and 
$\frac{1}{r}$ lie in $[0,1]$.
\end{proposition}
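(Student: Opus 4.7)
The plan is to prove this via decreasing rearrangements. The two classical ingredients are the product-rearrangement inequality $(f_1 f_2)^*(t) \leq f_1^*(t/2) f_2^*(t/2)$ (which follows from the set inclusion $\{|f_1 f_2|>\alpha\beta\}\subset\{|f_1|>\alpha\}\cup\{|f_2|>\beta\}$ applied with $\alpha = f_1^*(t/2)$ and $\beta = f_2^*(t/2)$) and the fact that for a fixed $p\in(1,\infty)$ the Lorentz spaces $L^{p,r}$ are nested: $\|f\|_{L^{p,s}} \leq C_{p,r,s}\|f\|_{L^{p,r}}$ whenever $r\leq s$. The hypothesis $1/p = 1/p_1+1/p_2 < 1$, i.e.\ $p>1$, is precisely what makes these quasi-norms well-behaved and the nesting quantitatively available.

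First I would reduce to the borderline case $1/r = 1/r_1+1/r_2$: if the inequality holds at this value of $r$, call it $\tilde r$, then for any $r$ with $1/r \leq 1/r_1+1/r_2$ (equivalently $r \geq \tilde r$) the nesting embedding gives $\|f_1 f_2\|_{L^{p,r}}\leq C\|f_1 f_2\|_{L^{p,\tilde r}}$, and the borderline estimate supplies the rest. Next, for the main case $r_1,r_2,r<\infty$ with $1/r = 1/r_1+1/r_2$, I would start from
\[
\|f_1 f_2\|_{L^{p,r}}^r = \int_0^\infty \bigl(t^{1/p}(f_1 f_2)^*(t)\bigr)^r\,\frac{dt}{t},
\]
apply the product rearrangement inequality, and change variables $t = 2s$; this produces a constant depending only on $p$ and $r$ times the integral $\int_0^\infty s^{r/p} f_1^*(s)^r f_2^*(s)^r\,\frac{ds}{s}$.

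Splitting $s^{r/p} = s^{r/p_1}\,s^{r/p_2}$ via the scaling identity $1/p = 1/p_1+1/p_2$ writes the integrand as a product of $u_j(s) = \bigl(s^{1/p_j} f_j^*(s)\bigr)^r$ against the multiplicative Haar measure $ds/s$. Applying H\"older's inequality with exponents $r_1/r$ and $r_2/r$ (which are conjugate exactly because $1/r = 1/r_1+1/r_2$) converts each resulting factor into $\|f_j\|_{L^{p_j,r_j}}^{r_j}$, and raising to the power $1/r$ delivers \eqref{HIB}. The endpoint cases with $r_j=\infty$ or $r=\infty$ are handled by the same scheme, with the relevant H\"older step replaced by pulling a supremum outside of an integral.

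The only real technical point is the product rearrangement inequality; once that is in hand, the proof is essentially H\"older's inequality on $(0,\infty)$ against $ds/s$, with all exponent arithmetic forced by the scaling relations in the hypothesis.
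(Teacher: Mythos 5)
The paper offers no proof of its own for Proposition \ref{LorentzHolder} --- it simply points the reader to O'Neil \cite{MR0146673}, Theorem 3.4 --- so there is no in-text argument to compare against line by line. Your argument is correct, and it is not O'Neil's original route (which proceeds through the averaged rearrangement $f^{**}(t)=\frac{1}{t}\int_0^t f^*(s)\,ds$ and a submultiplicativity lemma for $f^{**}$); rather it is the standard modern alternative: the distribution-set inclusion giving $(f_1f_2)^*(t)\leq f_1^*(t/2)f_2^*(t/2)$, the change of variable $t=2s$ against the Haar measure $ds/s$, the split $s^{r/p}=s^{r/p_1}s^{r/p_2}$ coming from $1/p=1/p_1+1/p_2$, and H\"older with exponents $r_1/r$ and $r_2/r$. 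At the borderline $1/r=1/r_1+1/r_2$ those exponents are each at least $1$ because $r\leq\min(r_1,r_2)$, and the open range $1/r<1/r_1+1/r_2$ is recovered via the nesting $L^{p,\tilde r}\hookrightarrow L^{p,r}$ for $\tilde r\leq r$, exactly as you set it up; your treatment of the $r_j=\infty$ endpoints by pulling out a supremum is also fine. Two small caveats worth recording but not affecting validity: the nesting embedding $L^{p,\tilde r}\hookrightarrow L^{p,r}$ holds for every fixed $p\in(0,\infty)$, so the hypothesis $1/p<1$ is not what makes the embedding quantitative (it is better understood as the normability/nondegeneracy threshold for the target space); and the borderline index $\tilde r$ defined by $1/\tilde r=1/r_1+1/r_2$ may drop below $1$ when $1/r_1+1/r_2>1$, which is still a legitimate Lorentz quasi-norm even though the proposition states its hypothesis only for $r\in[1,\infty]$.
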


\note{ There is an endpoint result, $\|fg\|_1 \leq C 
  \|f\|_{p,r}\|g\|_{p',r'}$, but this is not helpful for us. Should we
include it? }
We also need the easily verified  fact that if  $\lambda/k = 1/r$, then for 
$\frac{1}{r}\in [0,1]$, 
\begin{equation}\label{LPow}
\| |\cdot|^{-\lambda}\|_{L^{r,\infty}}\leq C.
\end{equation}
%
Combining (\ref{LPow}) with the H\"older inequality \eqref{HIB} gives
\begin{equation} \label{LHCons}
\|f\|_{L^{r,q}} \leq C
  \|f\|_{L^{p,q}_ \lambda }, \qquad \frac 1 p + \frac \lambda  k  \in (0,1), \ \frac 1 p , \frac \lambda k \in (0,1), \ \frac 1 q \in [0,1]. 
\end{equation}
\note{ I am not sure what we finally decided by including $ \reals
  ^k$.   We could include that the estimate is for $\reals ^k$ in the
  text and be consistent. Or decide that ``A foolish consistency is
  the hobgoblin\dots''
  
  Katy added a sentence in the intro about norms in $\reals^k$ following
  \eqref{normwlp}.}

The next Lemma is rather technical, so we will try to explain its role
in the proof of Theorem \ref{suff}. Using Theorem \ref{BCCTCor}  
and \eqref{LHCons}, we will be
able to establish  \eqref{goal} in the case where the weights satisfy 
$\lambda _j \geq 0$ for $j = 1,\dots,N$.  Lemma \ref{Step} 
allows us to make a reduction to non-negative exponents. In the case 
where one or more of the exponents $
\lambda _j$ is negative, we  use linear relations among the
vectors $ v_j$ to relate an instance of the estimate \eqref{goal} in
spaces where the weight has a negative exponent to a family of
estimates in spaces where the weights all have  non-negative  exponents. 

%
%
In the  next Lemma and throughout the remainder of this paper, we will denote the support of a vector by $ \supp
( \lambda _1, \dots, \lambda _N )  = \{ j : \lambda _j \neq 0\}$. We
also use $ \lambda ^ +$ to denote the positive part of a real number $
\lambda$. 
\begin{lemma} \label{Step} Suppose that the indices $(\lambda _1, \dots, \lambda _N) $  satisfy \eqref{Index}. Then there exists a family
  of indices $\{ \alpha ^ \ell\} = \{ (\alpha _1 ^ \ell, \dots, \alpha_N^ \ell) : \ell =1 ,\dots, L\}$ so that
  \begin{gather}\label{E}
    \Lambda (f_1, \dots, f_N) \leq C \sum _ { \ell =1 } ^L \Lambda (
    |\cdot |^ { \alpha _1 ^ \ell } f_1, \dots, |\cdot  | ^ { \alpha _N^
      \ell} f_N), \\
    \label{P}
    0\leq \lambda _j -\alpha _j ^ \ell,  \qquad
    j=1, \dots, N,\ \ell = 1, \dots, L, \\
        \label{P1}
     \lambda _j -\alpha _j ^ \ell \leq \lambda _j ^ +, \qquad
    j=1, \dots, N,\ \ell = 1, \dots, L, \\
    \label{Z} \sum _ { j=1 } ^ N \alpha _j ^ \ell = 0, \qquad \ell =
    1, \dots, L. 
  \end{gather}
\end{lemma}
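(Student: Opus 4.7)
The plan is to reduce \eqref{E} to a pointwise inequality in $x$ and then establish it using the pairwise basis assumption \eqref{Assumption}. Since each $f_j \geq 0$, \eqref{E} follows at once from
\[
1 \leq C \sum_{\ell=1}^{L} \prod_{j=1}^{N} |v_j \cdot x|^{\alpha_j^\ell} \qquad \text{for a.e.\ } x \in \reals^{2k},
\]
and, by \eqref{Z}, the right-hand side is invariant under the scaling $x \mapsto sx$ for $s>0$. Writing $t_j = |v_j \cdot x|$, it therefore suffices to bound $\sum_\ell \prod_j t_j^{\alpha_j^\ell}$ from below on the compact set of tuples $(t_j)$ arising from unit vectors $x$.

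I set $S^- = \{j : \lambda_j < 0\}$, $S^+ = \{j : \lambda_j \geq 0\}$, and $\mu = \sum_{j \in S^-}|\lambda_j|$. The case $S^- = \emptyset$ is trivial (take $L=1$, $\alpha^1=0$). Otherwise, specializing \eqref{Index} to $\ell \in S^+$ yields $\sum_{k \in S^+ \setminus \{\ell\}}\lambda_k \geq \mu$, which forces $|S^+| \geq 2$ and guarantees that the polytope
\[
\mathcal{A} = \Bigl\{\alpha \in \reals^N : \alpha_j = \lambda_j\ (j \in S^-),\ 0 \leq \alpha_k \leq \lambda_k\ (k \in S^+),\ \textstyle\sum_j \alpha_j = 0\Bigr\}
\]
is non-empty. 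I will take $\{\alpha^\ell\}_{\ell=1}^L$ to be the finitely many vertices of $\mathcal{A}$; by construction each satisfies \eqref{P}, \eqref{P1}, and \eqref{Z}.

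The remaining task, and the main substance of the proof, is the pointwise bound. Assumption \eqref{Assumption} gives, for each distinct triple $j, p, q$, a linear relation $v_j = av_p + bv_q$ with $a, b \neq 0$, whence $t_j^{\alpha} \leq C(t_p^{\alpha} + t_q^{\alpha})$ for any $\alpha > 0$. Choosing $p, q \in S^+$ for each $j \in S^-$ and expanding the product yields a bound
\[
\prod_{j \in S^-} t_j^{|\lambda_j|} \leq C \sum_i \prod_{k \in S^+} t_k^{\beta_k^{(i)}}
\]
for a finite collection $\{\beta^{(i)}\}$ of exponent tuples coming from assignments $\sigma: S^- \to S^+$. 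Whenever some coordinate $\beta_k^{(i)}$ exceeds the cap $\lambda_k$, I reapply the same inequality to the offending $t_k$ and redistribute the excess within $S^+$; the capacity estimate $\sum_{k \in S^+ \setminus \{\ell\}}\lambda_k \geq \mu$ supplied by \eqref{Index} ensures this redistribution can always be completed, and after finitely many steps every $\beta^{(i)}$ lies in the cross-section $\mathcal{A}|_{S^+}$. Finally, since $\beta \mapsto \sum_k \beta_k \log t_k$ is linear on the convex set $\mathcal{A}|_{S^+}$, its maximum is attained at a vertex, so $\prod_k t_k^{\beta_k^{(i)}} \leq \sum_\ell \prod_k t_k^{\beta_k^\ell}$; combined with the displayed bound this yields the pointwise inequality and hence \eqref{E}. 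The chief technical obstacle is the combinatorial bookkeeping of the iterated redistribution, for which the essential input is the capacity inequality from \eqref{Index}.
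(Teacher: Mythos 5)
Your reduction to the pointwise inequality
\[
\prod_{j\in S^-} t_j^{|\lambda_j|} \;\le\; C\,\sum_{\ell}\prod_{k\in S^+} t_k^{\alpha_k^\ell}
\]
is a legitimate reformulation, and your use of the pairwise-basis hypothesis \eqref{Assumption} to obtain $t_j^{\alpha}\le C(t_p^{\alpha}+t_q^{\alpha})$ for $\alpha>0$ is exactly the algebraic engine the paper uses. Setting up the target polytope $\mathcal{A}$ and checking its non-emptiness from \eqref{Index} are both correct. But the step you flag as ``the chief technical obstacle'' is not a bookkeeping afterthought; it is the entire content of the lemma, and as written your argument does not close it.

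Concretely, two things are missing. First, after the initial expansion the overflow amount $\beta_k^{(i)}-\lambda_k$ must be pushed onto other indices $p,q\in S^+$, which raises their exponents; you need to show that suitable $p,q$ with slack \emph{exist} at every stage, which is not automatic from an arbitrary initial assignment $\sigma:S^-\to S^+$. The paper handles this by maintaining the invariant $\sum_{j\neq m}(\lambda_j-\alpha_j)\ge 0$ for all $m$ (its \eqref{Index2}) throughout the recursion, which guarantees by an averaging argument that two indices with strictly positive slack can always be found. Your redistribution does not record or verify any such invariant, so the existence of recipients is unjustified. Second, and more seriously, you give no termination argument: each redistribution pushes mass onto another coordinate that may in turn overflow, and naively this can cycle. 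The paper's proof terminates because after each substitution the index $j_0$ being corrected is pinned exactly to $\lambda_{j_0}$, so $\supp(\lambda-\beta)$ and $\supp(\lambda-\gamma)$ are strictly smaller than $\supp(\lambda-\alpha)$; this strictly decreasing support is the termination certificate, and nothing in your writeup plays that role. (A smaller remark: the final ``vertex'' step is unnecessary — once every $\beta^{(i)}$ lies in $\mathcal{A}|_{S^+}$, the collection $\{\beta^{(i)}\}$ itself already witnesses \eqref{E}--\eqref{Z}, and the compactness/scaling observation near the start is never actually used.) So the route you take is essentially the paper's, but the paper's proof lives precisely in the part you left as an ``obstacle,'' and that gap would need to be filled with an invariant-plus-termination argument of the type sketched above.
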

\begin{proof} We begin with the collection consisting of one element,
  $ \alpha ^ 1 = (0,\dots, 0)$ and observe that the conditions
  \eqref{E}, \eqref{P1}, and \eqref{Z} are clear. We also note that since $(
  \lambda_1, \dots,\lambda _ N)$ satisfies \eqref{Index}, then
\begin{equation}
  \label{Index2} \sum _ { j \neq m }  ( \lambda _j - \alpha _j ^ \ell)
  \geq 0, \qquad m=1,\dots, N, \ \ell = 1, \dots,L
\end{equation}
    holds for  this collection.  However, the condition \eqref{P} will
  fail if some $ \lambda _j <0$. If the condition \eqref{P} fails and
  we have $\lambda _{j_0}- \alpha _ { j_0}^ \ell  < 0$ for some $ j_0$
  and $\ell$, we will replace $ \alpha ^ \ell$ with two vectors $
  \beta$ and $ \gamma$ so that \eqref{E}, \eqref{P1}, and \eqref{Z} continue to
  hold for the new family. We will also have that the sets $ \supp (
  \lambda  - \beta ) $  and $\supp ( \lambda  -\gamma) $ are proper 
  subsets of $ \supp ( \lambda  -\alpha^\ell )$. This guarantees that the substitution
  procedure will eventually terminate and we will obtain a collection
  that satisfies \eqref{P}.

  To describe the substitution step, let $ \alpha = (\alpha_1, \dots, \alpha _N ) $ satisfy \eqref{P1}, \eqref{Z}, and \eqref{Index2},  but suppose
  that there is an index $j_0$ so that $ \alpha _{ j _0 } >
  \lambda _ { j_0}$. We claim that: (1) We may find two indices $j_1$
  and $j_2$ so that $ \alpha _{ j _m } < \lambda _{ j _m } $,
  $m=1,2$; (2) We may replace $ \alpha$ by two vectors $ \beta$ and $
  \gamma$ so that
  \begin{equation}\label{Progress}
  \Lambda (|\cdot |^ { \alpha_1 } f_1, \dots, |\cdot |^ { \alpha _N }
  f_N)
  \leq C(
  \Lambda (|\cdot |^ { \beta_1 } f_1, \dots, |\cdot |^ { \beta _N }
  f_N) +
  \Lambda (|\cdot |^ { \gamma_1 } f_1, \dots, |\cdot |^ { \gamma _N }
  f_N)) 
  \end{equation}
  with $\supp( \lambda _1 -\beta _ 1, \dots, \lambda _N - \beta_N), \supp ( \lambda _1 -\gamma  _1, \dots, \lambda _N-\gamma _N) \subset \supp ( \lambda _1 - \alpha _1, \dots, \lambda _N- \alpha _N) \setminus \{ j _ 0 \} $.
In addition, the vectors $
  \beta $ and $\gamma $ satisfy \eqref{P1}, \eqref{Z}, and \eqref{Index2}.

  To establish (1), we take the average of the $N$ inequalities in
  \eqref{Index2} and obtain that $ \sum _ { j=1} ^N ( \lambda _j
  -\alpha _j) \geq 0$. Thus, if there is one index $j_0$ with $
  \lambda _{ j _0 }-\alpha _ { j _0} < 0$, then there must be an index
  $j_1$ with $ \lambda _{ j _1} -\alpha _ { j _1}>0$. Now, we may use
  the condition \eqref{Index2} with $ m = j _1$ to find $j_2$ with 
$ j_2 \neq j_1$ and $ \lambda _{ j_2} - \alpha _ { j
    _2} >0$.  Recalling assumption \eqref{Assumption},  the set $ \{ v_{j_1} , v_{ j _2} \}$ forms a 
  basis for $ \reals ^2$ and  we may write $ v_{j_0} = c _1 v_ { j_1} +
  c_2 v_{ j_2}$. Since the index $ \alpha _{ j _0 } - \lambda _{ j _0
  } >0$, we have
  $$
  |v_{ j_0 } \cdot x | ^ { \alpha _ { j _0 } - \lambda_ { j _ 0 }}
  \leq C(
  |v_{ j_1 } \cdot x | ^ { \alpha _ { j _0 } - \lambda_ { j _ 0 }}
  +
  |v_{ j_2 } \cdot x | ^ { \alpha _ { j _0 } - \lambda_ { j _ 0 }}).
  $$
  Substituting this inequality into the form $ \Lambda$  gives
  \eqref{Progress} with $\beta$ and $ \gamma$ defined by
    \begin{align*}
    \beta _ {j _ {\hphantom{0}}}& = \alpha _j ,  \qquad \mbox{if } j \neq j _0, j_1\\
    \beta _ { j _ 0 } & = \lambda _ { j _ 0 } = \alpha _ { j _ 0 }
    +\lambda _ { j _0} - \alpha _ { j _0 } \\
        \beta _ { j _ 1 } &  = \alpha _ { j _ 1 }
        - (\lambda _ { j _0 } - \alpha _{ j_0}), \\
  \end{align*}
  and
    \begin{align*}
    \gamma _ {j _ {\hphantom{0}}}& = \alpha _j ,  \qquad \mbox{if } j \neq j _0, j_2\\
    \gamma _ { j _ 0 } & = \lambda _ { j _ 0 } = \alpha _ { j _ 0 }
    +\lambda _ { j _0} - \alpha _ { j _0 } \\
        \gamma _ { j _ 2 } &  = \alpha _ { j _ 2 }
         - (\lambda  _ { j _0 }- \alpha _ { j_0}). \\
    \end{align*}
As $\lambda _j- \beta_j= \lambda _j - \alpha_j$ for $j\neq j_0, j_1$,  $ \lambda _{ j _0} - \beta _{j _0} =0$ and $\lambda _ { j _1 } - \beta _ { j_1} < \lambda _ { j_1} - \alpha_ { j _1}$, it follows that $ \beta$ satisfies \eqref{P1} and similarly $ \gamma $ satisfies \eqref{P1}. 
    As $ \lambda _{ j _0 } - \beta_{ j_0 } = \lambda _ { j_0 } -
    \gamma _{ j _0 } =0$ and $j_1,j_2$ are in $ \supp ( \lambda -
    \alpha)$, we have that  $ \lambda - \beta$ and $ \lambda - \gamma$ 
    have strictly smaller supports than $ \lambda - \alpha$.
We observe that if  $
    \lambda _j < 0$ for some $j$, then this procedure will eventually produce
    vectors with $ \alpha _j ^ \ell = \lambda _j$ and thus satisfy \eqref{P}. 

We verify that the new vectors $ \beta$ and $ \gamma$ satisfy
\eqref{Index2}. From the definition of $ \beta $, it follows that
$$
\sum _ { j \neq j_0}( \lambda _j -\beta _j) = \sum _ { j =1} ^ N (
\lambda _j - \alpha _j)\geq 0
$$
since we have already observed that $ \sum _ { j =1 } ^N ( \lambda _j
-\alpha _j ) \geq 0$. Next we observe that the definition of $ \beta$
and \eqref{Index2} give
$$
\sum _ { j \neq j_1 } ( \lambda _j -\beta _j ) = ( \alpha_{ j_0 } -
\lambda _ { j _0 })+ \sum _ { j\neq j_1} (
\lambda _j - \alpha _j)  \geq 0.
$$
Finally, if $\ell$ is not $j_0$ or $j_1$, we have
$$
\sum _ { j \neq \ell } ( \lambda _j -\beta _j) = \sum _ { j \neq \ell } (
\lambda _j -\alpha _j ) \geq 0.
$$
The argument to show \eqref{Index2} for $\gamma $ is identical. 
\end{proof}

Thanks to Lemma \ref{Step}, we have reduced estimating the form $
\Lambda ( f_1, \dots , f_N)$ to estimating expressions of the form
$$
\Lambda ( |\cdot |^ { \alpha _1 } f_1, \dots, |\cdot |^ { \alpha _N }
f_N) 
$$
where the indices $( \alpha _ 1, \dots , \alpha _N) $ satisfy \eqref{P}, \eqref{P1} and
\eqref{Z}.

\begin{lemma}\label{Step1Lemma}
  Suppose that the indices $(1/p_1, \lambda _1, \dots, 1/p_N, \lambda _N) $
  satisfy the conditions (\ref{Scaling}),  (\ref{HyperStrict}), 
  (\ref{Index}), and the vector $( \alpha _1, \dots, \alpha _N) $ satisfies
  \eqref{P}, \eqref{P1} and \eqref{Z}. Then we have
  \begin{gather}\label{LAScale}
    \sum _ { j =1  }^N( \frac 1 { p_j } + 
 \frac { \lambda _j - \alpha _j } k )= 2,
\\\label{LAHyper}
  \sum _ { j \neq \ell } (\frac 1 { p_j } + \frac  { \lambda _ j - \alpha _j }{ k }) > 1, \qquad
  \ell = 1,\dots, N. 
  \end{gather}
\end{lemma}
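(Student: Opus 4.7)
The statement \eqref{LAScale} should be essentially immediate: summing the equality $\sum_j (1/p_j + \lambda_j/k) = 2$ from \eqref{Scaling} and the zero-sum condition \eqref{Z} on $(\alpha_1,\ldots,\alpha_N)$ gives
\[
\sum_{j=1}^N \Bigl( \frac{1}{p_j} + \frac{\lambda_j - \alpha_j}{k} \Bigr) = 2 - \frac{1}{k} \sum_{j=1}^N \alpha_j = 2.
\]

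For \eqref{LAHyper}, my plan is to invoke \eqref{LAScale} already proved, and rewrite the desired inequality as an upper bound on the single $\ell$-th summand:
\[
\sum_{j\neq \ell} \Bigl( \frac{1}{p_j} + \frac{\lambda_j - \alpha_j}{k}\Bigr) > 1 \quad \Longleftrightarrow \quad \frac{1}{p_\ell} + \frac{\lambda_\ell - \alpha_\ell}{k} < 1.
\]
First, note that combining \eqref{Scaling} with \eqref{HyperStrict} for this $\ell$ gives $1/p_\ell + \lambda_\ell/k < 1$. Now invoke \eqref{P1}, which says $\lambda_\ell - \alpha_\ell \leq \lambda_\ell^+$.

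The main (minor) obstacle is to handle the sign of $\lambda_\ell$. If $\lambda_\ell \geq 0$, then $\lambda_\ell^+ = \lambda_\ell$ and therefore
\[
\frac{1}{p_\ell} + \frac{\lambda_\ell - \alpha_\ell}{k} \leq \frac{1}{p_\ell} + \frac{\lambda_\ell}{k} < 1.
\]
If $\lambda_\ell < 0$, then $\lambda_\ell^+ = 0$, so \eqref{P1} gives $\lambda_\ell - \alpha_\ell \leq 0$ and therefore
\[
\frac{1}{p_\ell} + \frac{\lambda_\ell - \alpha_\ell}{k} \leq \frac{1}{p_\ell} < 1,
\]
where the last strict inequality uses the standing hypothesis $1/p_\ell \in (0,1)$ from the enclosing theorem. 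Either way we obtain the desired strict inequality, completing the proof.

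Note that the more restrictive condition \eqref{P} ($\alpha_j \leq \lambda_j$) is not needed for this lemma; only \eqref{P1} and \eqref{Z} together with the hypothesized conditions on $(1/p_j,\lambda_j)$ enter. Condition \eqref{P} presumably enters elsewhere in the proof of Theorem \ref{suff} (likely to ensure that the pushed weights $\lambda_j - \alpha_j$ remain non-negative so that the Lorentz-space machinery applies).
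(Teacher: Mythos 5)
Your proof is correct and follows essentially the same strategy as the paper's: \eqref{LAScale} by summing \eqref{Scaling} and \eqref{Z}, and \eqref{LAHyper} by a case split on the sign of $\lambda_\ell$. The only difference is cosmetic reorganization --- you first subtract to reduce \eqref{LAHyper} to the single-term bound $1/p_\ell + (\lambda_\ell - \alpha_\ell)/k < 1$, whereas the paper bounds the $(N-1)$-term sum directly as $\alpha_\ell/k + \sum_{j\neq\ell}(1/p_j + \lambda_j/k)$. Your parenthetical observation is also accurate: in the case $\lambda_\ell < 0$ the paper invokes both \eqref{P} and \eqref{P1} to conclude $\lambda_\ell - \alpha_\ell = 0$, but as you note the one-sided bound $\lambda_\ell - \alpha_\ell \leq 0$ from \eqref{P1} alone already suffices, so \eqref{P} is not actually needed for this lemma.
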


\begin{proof}
  The identity \eqref{LAScale} follows from \eqref{Scaling} and
  \eqref{Z}.
  To  establish \eqref{LAHyper}, observe that using \eqref{Hyper} and  \eqref{Z}, we
  obtain
  $$
  \sum _ { j \neq \ell } (\frac 1 { p _j } +  \frac { \lambda _j - \alpha _j } k )
  = \frac{ \alpha_\ell } { k } +\sum _{ j \neq \ell } ( \frac 1 { p _j
  } + \frac{ \lambda _j} k). 
  $$
If we have $ \lambda _\ell \geq 0$, then \eqref{P1} implies $ \alpha _
\ell \geq 0 $ and \eqref{LAHyper} follows for such $ \ell$. In the case
that $ \lambda _ \ell <0$, we use \eqref{Scaling}  to obtain
$$
\frac{ \alpha_\ell } { k } +\sum _{ j \neq \ell } ( \frac 1 { p _j
  } + \frac{ \lambda _j} k) = 2 -( \frac 1 { p_\ell} +  \frac {
  \lambda _ \ell - \alpha _ \ell} k ) = 2 - \frac 1 { p_\ell} .
$$
The second equality follows from \eqref{P} and \eqref{P1} which gives $\lambda
_ \ell - \alpha _ \ell = 0$. 
Now \eqref{LAHyper}  follows since $ 1/p_ \ell
\in (0,1)$. 
\end{proof}

With these preliminaries out of the way, we are ready to
begin the proof of Theorem \ref{suff}. In fact, we  will prove a more general
theorem in Lorentz spaces,  Theorem
\ref{SuffLorentz} below. We note that 
Theorem  \ref{suff} then follows from Theorem \ref{SuffLorentz} by setting $p_j = r_j$.

\begin{theorem}\label{SuffLorentz} Suppose that the indices $(1/p_1, \lambda
  _1, \dots, 1/p_N, \lambda _N)  $ lie in $ ((0,1) \times \reals) ^N$ and
  satisfy \eqref{Scaling}, \eqref{HyperStrict}, and
  \eqref{Index}. Also, let the vector of indices $(1/r_1, \dots, 1/r_N)  \in [0,1]^N $ satisfy \eqref{Interpolation}. 
  Then we have
  $$
  \Lambda ( f_1, \dots, f_N) \leq C \prod _ { j =1 } ^N \| f _ j \| _
          { L^ { p_j, r _j } _ { \lambda _j }  }.
$$          
\end{theorem}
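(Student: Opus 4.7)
The plan is to chain together Lemma \ref{Step}, the Hölder-type consequence \eqref{LHCons}, and the unweighted Lorentz estimate of Theorem \ref{BCCTCor}. First I would apply Lemma \ref{Step} to reduce the target inequality to bounding, for each $\ell=1,\dots,L$, the single expression
$$\Lambda\bigl(|\cdot|^{\alpha_1^\ell} f_1,\dots,|\cdot|^{\alpha_N^\ell} f_N\bigr) \leq C \prod_{j=1}^N \|f_j\|_{L^{p_j,r_j}_{\lambda_j}}.$$
The crucial gain from Lemma \ref{Step} is that the residual weights $\mu_j^\ell:=\lambda_j-\alpha_j^\ell$ are all non-negative, which is precisely what makes the Hölder step in Lorentz spaces available. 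Summing the $L$ resulting bounds and absorbing $L$ into the constant finishes the proof.

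Fix $\ell$, drop the superscript, and set $1/q_j = 1/p_j + \mu_j/k$. The heart of the argument is to verify that $(1/q_1,\dots,1/q_N)$ is an admissible tuple of exponents for Theorem \ref{BCCTCor}. Lemma \ref{Step1Lemma} supplies $\sum_j 1/q_j = 2$ together with $\sum_{j\neq m} 1/q_j > 1$ for each $m$, so $1/q_j \in (0,1)$, and this in turn forces $\mu_j/k < 1$ since $1/p_j > 0$. Whenever $\mu_j > 0$, applying \eqref{LHCons} with $\lambda=\mu_j$ to the function $|\cdot|^{\alpha_j} f_j$ yields
$$\bigl\||\cdot|^{\alpha_j} f_j\bigr\|_{L^{q_j,r_j}} \leq C \bigl\||\cdot|^{\lambda_j} f_j\bigr\|_{L^{p_j,r_j}} = C \|f_j\|_{L^{p_j,r_j}_{\lambda_j}},$$
and when $\mu_j=0$ the corresponding inequality with $q_j=p_j$ is an equality.

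With these factor-by-factor estimates in hand, Theorem \ref{BCCTCor} applied to the unweighted form at exponents $(q_j,r_j)$ --- whose remaining hypothesis $\sum_j 1/r_j \geq 1$ is precisely \eqref{Interpolation} --- delivers
$$\Lambda\bigl(|\cdot|^{\alpha_1} f_1,\dots,|\cdot|^{\alpha_N} f_N\bigr) \leq C \prod_{j=1}^N \bigl\||\cdot|^{\alpha_j} f_j\bigr\|_{L^{q_j,r_j}} \leq C \prod_{j=1}^N \|f_j\|_{L^{p_j,r_j}_{\lambda_j}}.$$

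The main obstacle I anticipate is really bookkeeping: one must confirm that each derived exponent $1/q_j$ lies in the \emph{open} interval $(0,1)$ so that both \eqref{LHCons} and Theorem \ref{BCCTCor} are legally applicable. This is exactly where the strict hyperplane condition \eqref{HyperStrict} is essential, since it propagates through Lemma \ref{Step1Lemma} to the strict inequalities $\sum_{j\neq m} 1/q_j > 1$, placing the exponents in the interior of $\mathcal{P}_0$ where the real-interpolation argument underlying Theorem \ref{BCCTCor} operates. Once this interval check is verified, the deduction is essentially algebraic.
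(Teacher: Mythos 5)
Your proposal is correct and follows essentially the same route as the paper's proof: reduce via Lemma \ref{Step} to the case of non-negative residual weights, define $1/q_j = 1/p_j + (\lambda_j - \alpha_j)/k$, check that $(1/q_1,\dots,1/q_N)$ lies in the interior of $\mathcal{P}_0$, invoke Theorem \ref{BCCTCor}, and then pull back to the weighted Lorentz norm via \eqref{LHCons}. The only cosmetic difference is how you verify $1/q_j<1$: you read it off from the conclusions of Lemma \ref{Step1Lemma} (namely $\sum_j 1/q_j = 2$ and $\sum_{j\neq m} 1/q_j > 1$), whereas the paper subtracts \eqref{HyperStrict} from \eqref{Scaling} directly; both arguments are equivalent, and your separate treatment of the $\mu_j = 0$ case is a clean way of handling the same point the paper addresses by partitioning the indices according to the sign of $\lambda_j$.
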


\begin{proof} According to Lemma \ref{Step},
  it suffices to consider expressions of the form
  $$
  I =\Lambda ( |\cdot |^ { \alpha _1 } f_1, \dots, |\cdot |^ { \alpha _N }
f_N) 
  $$
with $ ( \alpha _ 1, \dots, \alpha _N) $ satisfying \eqref{P}, \eqref{P1} and
(\ref{Z}).

We define exponents
\begin{equation}
  \frac 1 { q_ j } = \frac 1 { p _j } + \frac { \lambda _ j - \alpha _
    j } k , \qquad j =1, \dots,N. 
\end{equation}
We claim that this vector of exponents $(1/q_1, \dots, 1/q_N)$ 
lies in the set $ \intr({\cal P}_0)$
where $ {\cal P }_0$ is the polytope from the proof of Theorem
\ref{BCCTCor}. 

We first show that $ 1/q_j \in (0,1)$ for all $j$. 
Since we assume that $  1/ p _j \in (0,1)$ and $ 1/q_j = 1 /p_j$ if $
\lambda _j \leq 0$, we have $ 1/q_j \in  (0,1)$ in this case. 
Thus it remains to show 
 that $ 1/q_j  \in (0,1)$ if $\lambda _j >0$.
If $ \lambda _j \geq 0$, we claim that 
$$
0 < \frac 1 { p _ j } + \frac { \lambda _ j - \alpha _ j } k \leq
\frac 1 { p _ j } + \frac {\lambda _ j} k  < 1 .
$$
The first two inequalities follow from \eqref{P}, \eqref{P1} and our assumption
that $ 1/p_j >0$. The last follows
by subtracting \eqref{HyperStrict} from \eqref{Scaling}. 
Thus we may use Theorem \ref{BCCTCor}  to conclude that
$$
I \leq C \prod _ { \{ j: \lambda _j \leq 0 \} } \|f_ j \| _ { L^ { p _ j,r_j }_{ \lambda _ j
  } } \cdot \prod _ { \{j: \lambda _j > 0 \} } \| f_j \|_ { L^ { q_ j , r _ j } _{
    \alpha _ j } }.
$$
Finally, the observation \eqref{LHCons}
gives
$
\| f_ j \| _ { L^ { q_j, r_j } _ { \alpha _ j } } \leq C \| f _ j \| _
   { L^ { p_j, r_j } _ { \lambda _ j }},
$ for $j$ with $ \lambda _j >0$ which 
 completes the proof. 
\end{proof}

\section{Proof of Theorem \ref{nec}}\label{necproof} In this section, we give necessary conditions for the estimate \eqref{goal}.  These conditions follow by examining the behavior of the form under scaling in $ \reals ^ { 2k}$ and along subspaces. To carry out this argument, we need to find functions $( \phi_1, \dots, \phi_N)$ for which the form is not zero and the norms in $L^{p_j} _{ \lambda_j}$ are finite. Due to the possible singularity of the power weight at the origin, we use functions that are supported away from the origin. This makes the arguments slightly more involved than in earlier works. 

The necessity of \eqref{Scaling} follows by rescaling the form in $ \reals ^ { 2k}$. 
We begin our proof  by  choosing a unit vector
$w_1\in\mathbb{R}^2$ with $w_1\cdot v_j \neq 0$ for  
$j=1,\dots,N$, and let $w_2\in\mathbb{R}^2$ be orthogonal to $w_1$. We
let $ \varepsilon >0$ to be fixed and  define a set
$S\subset\mathbb{R}^{2k}$ by
\begin{align*}
S=\{yw_1+\varepsilon y'w_2: 1\leq |y|\leq 2, 1\leq |y'|\leq 2,
y,y'\in\mathbb{R}^k\}.
\end{align*}
In the definition of $S$,  we use the notation $yw=(yw^1,yw^2)\in\mathbb{R}^k\times\mathbb{R}^k$, where
$y\in\mathbb{R}^k$ and $w=(w^1,w^2)\in\mathbb{R}^2$.

Since $w_1\cdot v_j$ is non-zero for each $j=1,\dots,N$, we may choose
$\varepsilon$ small and $c_0$
such that
$
1/c_0\leq |v_j\cdot x|\leq c_0$ for $ x\in S, 1\leq j\leq N$.
Recall that $v_j\cdot x=v^1_jx^1+v^2_jx^2$ where $v_j=(v^1_j,v^2_j)\in\mathbb{R}^2$
and $x=(x^1,x^2)\in\mathbb{R}^k\times\mathbb{R}^k$.

Now let $\varphi_j(t)=\chi_{[R/c_0,Rc_0]}(|t|)$ and observe that if
$x\in 
R\,  S:=\{Rx: x\in S\}$ for $ R>0$
then we have that 
\begin{align*}
\prod_{j=1}^N \varphi_j (v_j\cdot x) = 1, \quad x\in R\,  S.
\end{align*}
We have $ \|\phi_j\|_ { L^{p_j} _ {\lambda_j} } = C
R^ { k /p_j + \lambda _j }$ for all $R>0$. Thus if we assume that 
estimate
(\ref{goal}) holds, we will have
\begin{align*}
cR^{2k} = |R\,  S| & \leq \Lambda(\varphi_1,\dots,\varphi_N)\\
&\leq C\prod_{j=1}^N \|\varphi_j\|_{L^{p_j}_{\lambda_j}}\\
&=CR^{\sum_{j=1}^N (\frac{k}{p_j}+\lambda_j)}.
\end{align*}
Since the inequality above is true for $0<R<\infty$, we obtain  \eqref{Scaling}.

To establish the necessity of estimate \eqref{Hyper}, we fix $\ell$
and let $w$ be a unit vector that is perpendicular to $v_{\ell}$. We define a set 
$S_R\subset\mathbb{R}^{2k}$ by
\begin{align}
S_R=\{yv_\ell+Ry' w: 1\leq|y|\leq 2, 1\leq|y'|\leq 2, y,y'\in\mathbb{R}^k \}.
\end{align}
We observe that $|S_R|\geq cR^k$ and for $R$ large there
exists a constant $c_0$ such that
\begin{align*}
R/c_0\leq |v_j\cdot x|\leq c_0R, \quad j\neq \ell,\, x\in S_R,
\end{align*}
and
\begin{align*}
|v_\ell|^2\leq |v_\ell\cdot x|\leq 2|v_\ell|^2, \quad x\in S_R.
\end{align*}
Thus, if we define $\varphi_j$ by
$
\varphi_j(t)=\chi_{[R/c_0,Rc_0]}(|t|)$, for $ j\neq\ell$ 
and 
$
\varphi_\ell(t)=\chi_{[|v_\ell|^2,2|v_\ell|^2]}(|t|)$, and assume that 
the estimate
\eqref{goal} holds, 
we will have
\begin{align}
cR^k\leq|S_R|\leq \Lambda(\varphi_1,\dots,\varphi_N)\leq cR^{\sum_{j\neq\ell}\frac{k}{p_j}+\lambda_j},
\quad R\,\,\mbox{ large.}
\end{align}
This implies \eqref{Hyper}.

Next, we turn to the necessity of condition \eqref{Index}. The proof is similar to the argument that we used for \eqref{Hyper}, 
except that we test on a set of unit size, rather than a set of
diameter comparable to $R$. As in
the proof of \eqref{Hyper}, we begin by fixing a vector $v_\ell$ and then let $w$ be a non-zero vector perpendicular
to $v_\ell$. We choose $y_0\in\mathbb{R}^k$ with $|y_0|=1$ and define
\begin{align*}
S_R=\{yv_\ell +(Ry_0+y')w: |y'|<1,\ 1<|y|<2,\  y,y'\in\mathbb{R}^k\},\quad
0<R<\infty.
\end{align*}
Note that the $2k$-dimensional measure of $S_R$ satisfies $|S_R|\geq c$.

From the definition of $S_R$, we have that there is a constant $c_0$
so that 
\begin{align*}
|v_j \cdot x-R(v_j \cdot w) y_0|\leq c_0, \quad x\in S_R\mbox{ and } j
\neq \ell. 
\end{align*}
Going further, if we set $\varphi_j(t) = \chi_{[0,c_0]}(|t-R(v_j\cdot w)y_0|)$ for $j \neq \ell$,  we have
$\varphi_j(v_j\cdot x)=1$ if $x\in S_R$. In \eqref{Assumption} we assume  that $v_j$ is
not parallel with $v_\ell$. Thus  we  have $v_j\cdot w \neq 0$ for $j \neq \ell$  and  it follows
that there exists  $R_0$ so that $
\|\varphi_j\|_{L^{p_j}_{\lambda_j}}\leq C R^{\lambda_j}$ for $ R>R_0$.

For $j=\ell$, we have $|v_\ell|^2\leq |v_\ell\cdot x|\leq 2|v_\ell|^2$, $x\in S_R$.
Thus if we put $\varphi_\ell(t)=\chi_{[|v_\ell|^2,2|v_\ell|^2]}(|t|)$,
then we have $\varphi_\ell(v_\ell\cdot x)=1$, $x\in S_R$ and 
$\|\varphi_\ell\|_{L^{p_\ell}_{\lambda_\ell}}\leq C$. Altogether, 
under the assumption the estimate \eqref{goal} holds, we obtain 
\begin{align}
c\leq |S_R| \leq \Lambda(\varphi_1,\dots,\varphi_N)\leq CR^{\sum_{j\neq \ell}\lambda_j}.
\end{align}
Letting $R\rightarrow \infty$  we obtain \eqref{Index}.

Finally, we consider the condition \eqref{Interpolation}. To begin, we
define a  function $\varphi _j$ by 
\begin{align*}
\varphi_j(t)=\sum_{m\geq 1} a_{j,m}2^{-m(\frac{k}{p_j}+\lambda_j)}\chi_{[2^m,2^{m+1}]}(|t|).
\end{align*}
A calculation shows that 
\begin{align*}
\int_{\reals^k} \varphi_j(t)^{ p_j} |t|^ { p_j\lambda _j }\,dt= c\sum_{m\geq 1}a_{j,m}^{ p_j}.
\end{align*}
Thus if we let $a_{j,m}=m^{-\frac{(1+\varepsilon)}{p_j}}$
for some $\varepsilon>0$, we have $\varphi_j\in L^{ p_j}$.

We choose $w_1\in\mathbb{R}^2$ a unit vector with $w_1\cdot v_j \neq 0$ for
$j=1,\dots,N$ and let $w_2$ be perpendicular to $w_1$. Fix $
\varepsilon >0$ and define
\begin{align*}
S_1 = \{w_1y + \varepsilon w_2 y':  
1\leq |y|\leq 2, 1\leq |y'|\leq 2,\ y,y'\in \mathbb{R}^2\}.
\end{align*}
Since $w_1\cdot v_j \neq 0$, $j=1,\dots, N$, we may choose
$\varepsilon$ small and find $c_0$ so that
\begin{align*}
1/c_0\leq |v_j\cdot x|\leq c_0.
\end{align*}

Finally, we set $S_m=2^m\cdot S$. If we have the estimate \eqref{goal},
then we will have 
\begin{align*}
\infty >  \Lambda(\varphi_1, \dots,\varphi_N) &\geq \sum_{m\geq 1}\int_{S_m} \prod_{j=1}^m
\varphi_j(v_j\cdot x)\, dx\\
&\geq c\sum_{m\geq 1} m^{-(1+\varepsilon)\sum_{j=1}^N \frac{1}{p_j}}.
\end{align*}
Since this estimate holds for all $\varepsilon>0$, it follows
that we must have $\sum_{j=1}^N \frac{1}{p_j}\geq 1$.\hfill $\Box$

\section{Application and Examples} 
\label{Apps}
We close by giving an application of our results to the study of weighted estimates for a multi-linear fractional integral. We fix $N$ distinct non-zero real numbers $\theta_1, \dots \theta _N$ and $ \lambda \in (0, k)$ and   define an $N$-linear operator by 
\begin{equation}\label{n2operator}
T_{N,\lambda} (f_1, \dots, f_N) (x) = \int _ { \reals^k} \frac 1 { |y|^ \lambda } \prod _ { j=1} ^N f_j (x- \theta _j y) \, dy .
\end{equation}
When $N=1$ (and $\theta _1=1$) this operator is the standard fractional integral or Riesz potential. The multi-linear version appears to have been introduced by Grafakos in \cite{MR1164632} who considers $L^p$-estimates.  
To study the operator \eqref{n2operator} we introduce the $N+2$-linear form
\begin{equation}\label{n2form}
 \Lambda _{N}( f_0, f_1,  \dots, f_N, f_{N+1})
 =\int _ { \reals^{ 2k}} f_{ N+1} (y) f_0(x)\prod _ { j =1} ^ N f_n ( x-\theta _1y) \, dy\, dx.
\end{equation}
This notation should not be confused with the notation $\Lambda _k$ that was only  used  in Proposition \ref{KEquals1}. 
Note that the collection of vectors $(e_1, e_1-\theta_1 e_2, \dots , e_1-\theta _N e _2, e_2)$ satisfies the assumption \eqref{Assumption} and thus the results of this paper apply to the form \eqref{n2form}. 
For $ 1 \leq p_j < \infty$, the estimate $\| T_{N, \lambda }  (f_1, \dots, f_N) \|_ { L^ { p_0'}_ { - \lambda _0}} \leq C \prod_ { j=0} ^ N \| f_j \| _ { L^ { p_j } _{ \lambda _j}} $ is equivalent to the following estimate for the form \eqref{n2form} with $ f_{N+1}= |\cdot |^ { - \lambda}$
\begin{equation} \label{n2estimate}
  | \Lambda _N ( f_0, \dots , f_N, |\cdot|^ { -\lambda} )|  \leq C \prod _ { j=0 } ^ N \|f_j \|_{ L^ { p_j } _{ \lambda _j }}. 
\end{equation}

Before giving our results for the form \eqref{n2form}, we note that Kenig and Stein \cite{MR1682725} and Moen \cite{MR3130311} have studied the operator $T_{N,\lambda}$ as a map into $L^p$-spaces with $p<1$.  Our use of duality to relate an estimate for an operator with an estimate for a form means that we are not able to study estimates with $p<1$. 

      \begin{theorem} \label{MLFIThm} Suppose that $\lambda$ and $(1/p_0, \lambda _0, \dots, 1/p_N, \lambda _N)$ satisfy the conditions: 
\begin{align}
&  \frac \lambda k + \sum _ { j=0} ^ N (\frac { 1 } {p_j} + \frac {\lambda _j} k )= 2,  \label{MLFIScale} \\
&  0<\lambda /k  <1, \\
 & \frac 1 { p_j }+ \frac  { \lambda _j }  k < 1, \  k=0, \dots, ,N,\label{gives1.9} \\
&  \sum _ { j =0}^N \frac 1 { p _j }  \geq 1, \\
 & \sum _ { j =0 } ^ N \lambda _j \geq 0 , \qquad \lambda + \sum _ { j \neq \ell } \lambda _j > 0,\   \ell = 0, \dots, N. \label{AnotherStrict}
\end{align}
Under these conditions, there exists a constant $C$ so that we have
the estimate \eqref{n2estimate}.
      \end{theorem}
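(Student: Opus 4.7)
The plan is to deduce \eqref{n2estimate} directly from Theorem \ref{SuffLorentz} applied to the $(N+2)$-linear form $\Lambda_N$ defined in \eqref{n2form}. The collection of vectors $\{e_1, e_1-\theta_1 e_2, \dots, e_1-\theta_N e_2, e_2\}$ satisfies the non-parallelism assumption \eqref{Assumption} because the $\theta_j$ are distinct and non-zero. The function $|\cdot|^{-\lambda}$ will fill the last argument of $\Lambda_N$, and a direct computation shows that $\||\cdot|^{-\lambda}\|_{L^{p_{N+1},\infty}_{\lambda_{N+1}}}<\infty$ for any pair $(p_{N+1},\lambda_{N+1})$ with $p_{N+1}=k/(\lambda-\lambda_{N+1})$ and $0<\lambda-\lambda_{N+1}<k$. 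Thus I have a one-parameter family of admissible descriptions of $|\cdot|^{-\lambda}$, and the proof reduces to choosing $\lambda_{N+1}$ so that the hypotheses of Theorem \ref{SuffLorentz} are met.

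I will set the Lorentz exponents $r_j=p_j$ for $0\leq j\leq N$ and $r_{N+1}=\infty$. Most of the required conditions are then immediate. The scaling identity \eqref{Scaling} follows from \eqref{MLFIScale} because $1/p_{N+1}+\lambda_{N+1}/k=\lambda/k$ by construction. The strict hyperplane condition \eqref{HyperStrict} evaluated at $\ell=N+1$ collapses to $\lambda<k$, while at each $\ell\leq N$ it becomes $1/p_\ell+\lambda_\ell/k<1$, exactly \eqref{gives1.9}. The interpolation hypothesis \eqref{Interpolation}, since $1/r_{N+1}=0$, reduces to $\sum_{j=0}^N 1/p_j\geq 1$, an assumed condition.

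The remaining condition, the index inequality \eqref{Index}, constrains the choice of $\lambda_{N+1}$. For $\ell=N+1$ it is exactly the assumed inequality $\sum_{j=0}^N\lambda_j\geq 0$; for $\ell\leq N$ it reads $\lambda_{N+1}\geq -\sum_{j\neq\ell,\,j\leq N}\lambda_j$. Setting $M=\max_{0\leq\ell\leq N}\bigl(-\sum_{j\neq\ell}\lambda_j\bigr)$, the strict inequalities collected in \eqref{AnotherStrict} state precisely that $M<\lambda$. I therefore select any $\lambda_{N+1}$ in the non-empty open interval $(\max(M,\lambda-k),\lambda)$. This places $1/p_{N+1}\in(0,1)$ and secures every instance of \eqref{Index}.

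The main obstacle is thus bookkeeping rather than substantive analysis: the strict inequalities \eqref{gives1.9} and \eqref{AnotherStrict} in the hypotheses of Theorem \ref{MLFIThm} translate precisely to the strict requirements of \eqref{HyperStrict} and to the room needed to choose $\lambda_{N+1}$ strictly less than $\lambda$ while still satisfying \eqref{Index}. With $\lambda_{N+1}$ fixed as above, Theorem \ref{SuffLorentz} bounds $\Lambda_N(f_0,\dots,f_N,|\cdot|^{-\lambda})$ by $C\,\||\cdot|^{-\lambda}\|_{L^{p_{N+1},\infty}_{\lambda_{N+1}}}\,\prod_{j=0}^N\|f_j\|_{L^{p_j}_{\lambda_j}}$, and absorbing the finite Lorentz norm of $|\cdot|^{-\lambda}$ into the constant yields \eqref{n2estimate}.
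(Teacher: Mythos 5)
Your proof is correct and follows essentially the same strategy as the paper's: view $|\cdot|^{-\lambda}$ as the $(N+2)$-th input to $\Lambda_N$, exploit its membership in a one-parameter family of weighted weak-$L^p$ spaces, and choose the free parameter so that the hypotheses of Theorem \ref{SuffLorentz} (with $r_j = p_j$ for $j\leq N$ and $r_{N+1}=\infty$) are all met. The only cosmetic difference is that the paper restricts $k/p_{N+1}<\min(m,k,\lambda)$ (forcing $\lambda_{N+1}>0$) while you allow the slightly larger window $k/p_{N+1}<\min(m,k)$; both choices are admissible, and your bookkeeping of why \eqref{Scaling}, \eqref{HyperStrict}, \eqref{Index}, and \eqref{Interpolation} hold is accurate.
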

It is tempting  to prove Theorem \ref{MLFIThm} by noting that $|\cdot |^ { -\lambda } $ lies in the Lorentz space $ L^\infty_ \lambda$. However, the case $p= \infty$ is not included in our Theorem \ref{SuffLorentz}. As a substitute, we observe that $ |\cdot |^ {- \lambda }$ belongs to the family of spaces $ L^ { p, \infty} _ \alpha$ with $ \alpha /k + 1/p = \lambda$ which allows us to  use the estimate  given in  Theorem \ref{SuffLorentz}. The assumption of a strict inequality in \eqref{AnotherStrict} is used when we  carry out this argument. Additionally,
note that condition \eqref{gives1.9} implies \eqref{HyperStrict}.
      \begin{proof}   
        We set $m = \min \{ \lambda + \sum _ { j \neq \ell} \lambda _j : \ell = 0, \dots N\}$. By our assumption \eqref{AnotherStrict} we have $m>0$ and thus we may choose $p_{ N+1}$ to satisfy $ 0 <  k /p_{ N+1} < \min(m,k, \lambda)$ and subsequently define $ \lambda _{ N+1}= \lambda - k / p _ { N+1}$. With these choices, Theorem \ref{SuffLorentz}  and the observation that
$ \| |\cdot |^ { -\lambda }\| _ {L^ { p_{ N+1}}_{\lambda _ {N+1}}}< \infty $ combine to give the Theorem. 
      \end{proof}

      We now show how Theorem \ref{MLFIThm} allows us to reprove several earlier results. Grafakos \cite[Theorem 1]{MR1164632} establishes the estimate \eqref{n2estimate} with $\lambda _j=0$, $j=0, \dots, N$ and 
      $$\frac 1 { p_j } \in [0,1), \qquad  \frac \lambda k + \sum _{ j=0}^N \frac 1 { p_j } =2. 
        $$
These cases are covered by Theorem \ref{MLFIThm}.     Note that Grafakos allows $p_j= \infty$, which can be handled by considering forms with a lower order of multi-linearity.

        The weighted, linear ($N=1$) case was treated by Stein and Weiss \cite[Theorem B*]{MR0098285} under the assumptions
        \begin{gather*}
          \frac 1 { p_0} + \frac 1 { p_1} + \frac { \lambda_0  + \lambda_1  +
    \lambda  } k = 2, \quad
  \frac 1 {p_j} \in (0,1),  \ j=0,1,\qquad 
  \frac \lambda k \in ( 0,1),     
\\   \frac 1 { p_0 } + \frac 1 { p_1} \geq 1, \quad
  \frac { \lambda _j }
  k + \frac 1 { p_j} < 1, \ j=0,1,  \qquad
    \lambda _0 + \lambda _ 1 \geq 0.
\end{gather*}
The scaling equality \eqref{MLFIScale} and the conditions $ \lambda _j/k + 1/p_j <1$ imply  $ (\lambda _j + \lambda ) /k >1/p_j' >0$ for $j=0,1$  so that Stein and Weiss's  conditions imply the strict inequality in condition \eqref{AnotherStrict}  in Theorem \ref{MLFIThm}. Thus, Theorem \ref{MLFIThm} gives a new proof of this classical  result.

As a final example, we consider several recent weighted results for the bilinear operator.   Hoang and Moen \cite[Theorem 10.1]{MR3776027} use general weighted estimates for bilinear fractional integrals to derive results with power weights. Their estimates do not appear to be optimal and later work by Komori-Furuya \cite[Theorem 2]{MR4047697} gives an improvement.  Komori-Furuya's conditions are 
\begin{gather*} 
  \frac  { \lambda} k + \sum _ { j=0}^2( \frac 1 {p_j} + \frac { \lambda _j } k ) =2, \quad \frac 1 { p _j } +\frac {\lambda _j } k < 1,\  j =0,1,2,\quad \frac 1 { p_0} + \frac 1 { p_1} + \frac 1 { p_2} \geq 1, 
  \\
  \lambda _0+ \lambda _1 +\lambda _2 \geq 0,\  \lambda  + \lambda _1 + \lambda _2 \geq 0,\  \lambda _j \leq \lambda , j = 1,2.
\end{gather*}
(Komori-Furuya neglects to list the condition $ 1/p_0 + 1/p_1 + 1/p_2 \geq 1$, but it is used in the argument given in section Appendix 6 of his paper, so we have included it here.) 
Note that the conditions $ \lambda _0+ \lambda _1 + \lambda _2 \geq 0$ and $ \lambda _j \leq \lambda $, $j=1,2$ imply $ \lambda + \lambda _0 + \lambda _j\geq 0$, $ j=1,2$. Thus, the result of Komori-Furuya will follow from Theorem \ref{MLFIThm}, except when we  have equality in   $ \lambda + \lambda _0 + \lambda _1 + \lambda _2 -\lambda _j \geq 0$ for some $j$ in $\{0,1,2\}$.  To give an explicit example, the vector of indices $(1/p_0, \lambda _0, 1/p_1, \lambda _1, 1/p_2 , \lambda _2, 0, \lambda )=(7/12, -1/8, 7/12, -1/8, 7/12, 1/4, 0, 1/4)$ will satisfy Komori-Furuya's conditions, but fails the strict inequality $ \lambda _0 + \lambda _1 + \lambda >0$ that we require. 
On the other hand  Theorem \ref{MLFIThm} includes sets of indices that are not covered by Komori-Furuya's results.   Again we give an explicit example: 
$ (1/p_0, \lambda _0, 1/p_1, \lambda _1, 1/p_2 , \lambda _2, 0, \lambda) =(1/2, 1/12, 1/2, 1/12, 1/2, 1/4,0,  1/12)$. Specifically, these indices fail the condition $\lambda _2 \leq \lambda$ of Komori-Furuya.
Our approach has the advantage of extending easily to the $N$-linear fractional integral. One might also argue that the set of conditions in Theorem \ref{MLFIThm} has the advantage of being more symmetric.

We close by giving a few examples that indicate that there is more work to do to
understand the behavior of the form on the boundary of the set
determined by the inequalities in Theorems \ref{nec} and
\ref{suff}. These examples are  for the form $\Lambda _1$ as defined in \eqref{n2form}, 
where it is easy to make calculations.

We consider the estimate
\begin{equation}\label{SW2} 
\int _ { \reals ^ { 2k}} f_1(x) f_2 (x-y) f_3 (y) \, dx\,dy
\leq C\prod _ { j=1} ^3 \| f_j \| _ { L^ { p_j} _ {\lambda _j} }.
\end{equation}
For our first example, fix
$(1/p_1, \lambda _1,1/p_2,\lambda_2,1/p_3,\lambda_3 ) = 
(1,0, 0, \lambda
_2, 1- \lambda _2 /k , 0)$. 
Noting that
\begin{equation} \label{SimplEst}
  |f( x) |\leq \| f \| _ { L^ \infty_ { \lambda
  }} |x|^ { -\lambda },
\end{equation}
 the failure of \eqref{SW2} for
this family of exponents  can be found, for example, in \cite[p.~119]{ES:1970}. However, if we consider the family of exponents,
$(1/p_1 , \lambda _1,1/p_2,\lambda_2,1/p_3,\lambda_3 ) = (1 , k-\lambda _2-\lambda _3,0, \lambda _2,0,\lambda
_3)$ with $ 0 < \lambda_2, \lambda _3 <k$ and $ \lambda _2 + \lambda
_3 >k$, the outcome is positive. Under this scenario,
using the estimate \eqref{SimplEst} for 
$ f_2$ and $f_3$ and elementary estimates (or 
the identity for the $k$-dimensional Beta function as in Stein
\cite[p.~118]{ES:1970}) gives 
$$
|\int_{ \reals ^k} f_2(x-y)  f_3(y) \, dy | \leq C\| f_2\| _{ L^ \infty _
  { \lambda _2}}
  \| f_3\| _{ L^ \infty _
    { \lambda _3}} |x|^ { k-\lambda _2-\lambda _3}.
    $$
Given this, the estimate \eqref{SW2} follows easily for this set of
exponents.

We close by listing a few avenues for further investigation.
\begin{enumerate}
\item Systematically study estimates for the forms treated in this paper on
    the boundary of ${\mathcal P}$ as defined in \eqref{setP}.
\item  Study estimates for more general Brascamp-Lieb forms in 
$L^p$-spaces with power weights.
  \item Consider estimates in weighted $L^p$-spaces with more general
    weights.
    
\end{enumerate}



\def\cprime{$'$} \def\cprime{$'$} \def\cprime{$'$} \def\cprime{$'$}
  \def\cprime{$'$} \def\cprime{$'$} \def\cprime{$'$} \def\cprime{$'$}
  \def\cprime{$'$} \def\cprime{$'$}

\end{document}